\documentclass[12pt]{article}
\usepackage{amsmath,amsfonts,amssymb,amsthm,amscd}
\title{Domination and regularity}
\date{\today}
\author{Anand Pillay\thanks{Supported by NSF grants DMS-1360702 and DMS-1665035}\\University of Notre Dame }
\newtheorem{Theorem}{Theorem}[section]
\newtheorem{Proposition}[Theorem]{Proposition}
\newtheorem{Definition}[Theorem]{Definition} 
\newtheorem{Remark}[Theorem]{Remark}
\newtheorem{Lemma}[Theorem]{Lemma}
\newtheorem{Corollary}[Theorem]{Corollary}
\newtheorem{Fact}[Theorem]{Fact}

\newcommand{\Q}{\mathbb Q}  
\newcommand{\Z}{\mathbb Z}  
\newcommand{\N}{\mathbb N}

\begin{document}
\maketitle

\begin{abstract} 
We discuss the close relationship between structural theorems in (generalized) stability theory, and graph regularity theorems.
\end{abstract}

\section{Introduction and preliminaries}
We point out  analogies between domination theorems in model theory and graph regularity theorems in various ``tame" contexts, showing that these are essentially the same theorems, modulo compactness and the pseudofinite yoga,  and if one is not so concerned with optimal bounds.

The  motivation comes partly from our joint works with Conant and Terry \cite{CPT1}, \cite{CPT2}, where ``tame" regularity theorems in a group environment {\em are} obtained  from  structural theorems (sometimes new) concerning stable and $NIP$ groups.

We will give later precise statements of all theorems (as well as references to other works). But for now we give a heuristic introduction to the notions in this paper.

First on the graph-theoretic side we recall  the regularity theorems which specialize the well known Szemer\'{e}di regularity theorem. We will focus on bi-partite graphs. Szemer\'{e}di regularity concerns {\em all} finite  graphs  $(V,W, E)$. It says that one can partition the vertex sets $V, W$ into a small number of sets $V_{1},..,V_{n}$, $W_{1},..,W_{m}$ such that outside a small exceptional set of pairs $(i,j)$, the induced subgraphs $(V_{i}, W_{j}, E|(V_{i}\times W_{j}))$ are almost regular, namely sufficiently large induced subgraphs have approximately the same density.  These are approximate or asymptotic statements in the sense that for every $\epsilon >0$ there is $N_{\epsilon}$ such that for every finite graph etc.

Tame versions of Szemer\'{e}di regularity place restrictions on the class of finite graphs $(V, W, E)$ considered, and try to get stronger conclusions.    The kind of restrictions are: omitting a certain induced subgraph, being uniformly definable in some nice structure, or being the collection of finite induced subgraphs of some given graph definable in a nice structure.
The improvements in the conclusions typically replace almost regularity by almost homogeneity (and sometimes outright homogeneity so giving a Ramsey-type theorem) and sometimes remove the need for the exceptional set.

On the model theory side, we work with theories $T$, or formulas $\phi(x,y)$ in a given theory, which are well-behaved in various senses, and we consider a Keisler measure $\mu$ on the $x$-sort over a saturated model $\bar M$, possibly restricted to definable sets in the Boolean algebra generated by instances of $\phi(x,y)$.  The domination statements have the form: there is a small model $M_{0}$,  suitable space $S$ of types over $M_{0}$, such that if $\mu_{0}$ is the measure on $S$ induced by $\mu$ then we have (generic) {\em domination} of the $x$ sort $X$ say by $S$ via the tautological map $\pi:X\to S$ taking $a\in X$ to its type over $M_{0}$:  for any suitable formula $\psi(x)$ over $\bar M$, outside a closed subset $E_{\psi}$ of $S$ of $\mu_{0}$-measure $0$, each fibre of $\pi$ cannot meet both $\psi(x)$ and $\neg\psi(x)$ in a ``$\mu$-wide" set.  

This is actually closely related to a stationarity statement: $\mu$ is the unique nonforking extension of its restriction to $M_{0}$. And we also see an exceptional set $E$ appearing, as in the graph regularity statement.

The work  with Conant and Terry mentioned earlier is  concerned with regularity (and structure) theorems in the context of finite groups $G$ equipped with a distinguished subset $A$.  These give rise to bipartitite graphs of the form $(G, G, E)$ where  $(a,b)\in E$ iff $ab\in A$.  Under assumptions ($k$-stable, $k$-$NIP$) on the relation $E$, we obtained  strong theorems on the structure of the set $A$ and its translates, where local stable and $NIP$ group theory played  a major role. We refer the reader to the preprints \cite{CPT1}, \cite{CPT2} and we will not explicitly discuss these group results any further in the current paper. 

We will go through three model-theoretic situations where there is a domination statement; smooth measures, generically stable measures in $NIP$ theories, and $\phi$-measures where $\phi(x,y)$ is stable.  In each environment we will conclude more or less directly, via compactness,  the relevant graph-regularity statement for suitable classes of finite graphs.   These statements are already in the literature in various forms and we will give full references.

This paper is  based partly on  seminar talks the author gave at the Institut Henri Poincar\'{e} in spring 2018 during the trimester on Model theory, Combinatorics and Valued Fields. Thanks to the IHP for its hospitality and to the organizers of the trimester and the seminars. Thanks  to Gabriel Conant and Caroline Terry for many discussions.  And for the record I would also  like to thank Udi Hrushovski who already in 2012 pointed out to me (and our co-athor) connections between the Lovasz-Szegedy paper \cite{L-Sz} and our paper  \cite{NIPIII} (in particular generic compact domination).

\vspace{5mm}
\noindent
No additional background is needed on the combinatorial side, as all the relevant statements (rather than proofs) are transparent.  

On the model theory side we will make use of Keisler measures in a $NIP$ and (formula-by-formula) stable environment. But we will make precise a few things which are not made explicit in the literature although should be considered folklore,

 Our model theory notation is standard. $T$ denotes a complete theory in a language $L$ and we will work in a very saturated or monster model $\bar M$ of $T$.

The book  \cite{Simon-book} is a useful reference for material on the $NIP$ side, but we will usually refer to the original sources  \cite{NIPI}, \cite{NIPII} for Keisler measures, and \cite{NIPIII} for generically stable and smooth measures. 
Insofar as stability is concerned \cite{Pillay-book} is a reference, although we take our definition of forking to be Shelah's. 

For $\phi(x,y)$ an $L$-formula, by a $\phi$-measure $\mu$ over $M$ we mean a finitely additive probability measure on the Boolean algebra of $\phi$-formulas over $M$, where by a $\phi$-formula over $M$ we mean a (finite) Boolean combination of instances $\phi(x,b)$ of $\phi(x,y)$ with $b\in M$. ``Global" means over the monster model. 

As usual a $\phi$-measure over $M$ can be identified with a regular Borel probability measure on the space $S_{\phi}(M)$ of complete $\phi$-types over $M$. 

  A $\phi$-measure over $M$ is said to be smooth if it has a unique extension to a $\phi$-measure over any larger model.

A characteristic property of $\phi$-measures when $\phi(x,y)$ is stable is the following (see also  Lemma 1.7 of \cite{Keisler}):
\begin{Fact} Let $\phi(x,y)$ be stable (for $T$). Then any $\phi$-measure, $\mu_{x}$, over a model $M$ is of the form 
$\sum_{i = 1, 2,...}\alpha_{i}p_{i}$ where $p_{i}$ is a complete $\phi$-type over $M$, the $\alpha_{i}$ are positive real numbers, and $\sum_{i}\alpha_{i} = 1$.
\end{Fact}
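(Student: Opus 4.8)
The plan is to regard $\mu$ as a regular Borel probability measure on the Stone space $S_\phi(M)$ and to show that this measure is \emph{purely atomic}; the desired decomposition $\mu = \sum_i \alpha_i p_i$ is then nothing but the list of its atoms, each complete $\phi$-type $p_i$ being identified with its Dirac measure, with $\alpha_i = \mu(\{p_i\}) > 0$, countably many of them since the masses are positive and sum to at most $1$, and $\sum_i \alpha_i = \mu(S_\phi(M)) = 1$ because a purely atomic probability measure puts all its mass on its atoms. The entire content of the Fact is therefore that the continuous (atomless) part of $\mu$ vanishes, and the only place where stability can and must enter is in controlling the topology of $S_\phi(M)$.

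First I would isolate the purely measure-theoretic input: every regular Borel probability measure on a compact Hausdorff space $K$ is purely atomic as soon as $K$ is \emph{scattered} (equivalently, $K$ has no nonempty perfect subset), a nonzero continuous part forcing a nonempty perfect set in the support. This is classical and I would simply cite it. The theorem thus reduces to the topological assertion that, for $\phi$ stable, $S_\phi(M)$ is scattered.

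The main step, where stability is used, is to prove that $S_\phi(M)$ has no nonempty perfect subset. Suppose $P \subseteq S_\phi(M)$ were nonempty and perfect. Since the clopen sets of $S_\phi(M)$ are exactly the $\phi$-formulas over $M$, with the instances $[\phi(x,b)]$ as a subbasis, I can build a Cantor scheme inside $P$: a binary tree $(P_s)_{s \in 2^{<\omega}}$ of nonempty relatively clopen sets with $P_{s0}, P_{s1} \subseteq P_s$ disjoint and nonempty, each split $P_{s0} = P_s \cap C_s$ realised by a $\phi$-formula $C_s$ over $M$ (possible because $P$ has no isolated points, so every relatively clopen piece is infinite and can be separated by a clopen set). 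Only countably many parameters $b$ occur among the $C_s$; I collect them, together with a small elementary submodel, into some $M_0 \preceq M$. The $2^{\aleph_0}$ branches of the tree then separate $2^{\aleph_0}$ points of $P$ already by $\phi$-formulas over $M_0$, and hence restrict to $2^{\aleph_0}$ distinct complete $\phi$-types over $M_0$, giving $|S_\phi(M_0)| \ge 2^{\aleph_0}$.

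The contradiction, and the real use of stability, is the local counting bound $|S_\phi(M_0)| \le |M_0|$, which I would derive from definability of $\phi$-types: over a model every complete $\phi$-type is definable, and for $\phi$ stable the defining formulas may be taken from a fixed small family with parameters in $M_0$, so there are at most $|M_0|$ of them \cite{Pillay-book}. Choosing $|M_0| < 2^{\aleph_0}$ this contradicts the previous paragraph, so $S_\phi(M)$ is scattered and the Fact follows. The one delicate point I anticipate is the bookkeeping on the size of $M_0$ relative to $|L|$ and $2^{\aleph_0}$; for countable $L$ one may take $M_0$ countable and the inequality $\aleph_0 < 2^{\aleph_0}$ is immediate, while in general uniform definability of $\phi$-types keeps $|S_\phi(M_0)| \le |M_0|$. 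I do not expect the Cantor scheme or the atomic--continuous decomposition to cause any trouble; the weight of the argument rests entirely on the stability-driven counting bound.
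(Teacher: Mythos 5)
Your argument is correct, but it takes a genuinely different route from the paper's. The paper proves the Fact by induction on Shelah's local rank $R_{\phi}$ (really the $\Delta$-rank for $\Delta=\{\phi(x,y),\,x=z\}$), which is finite precisely because $\phi$ is stable: the finitely many $\phi$-types of maximal rank supply the first atoms, and the complement of those with positive mass is exhausted by a chain of clopen sets whose successive differences have strictly smaller rank, so the induction closes. You instead show that $S_{\phi}(M)$ is \emph{scattered} --- via a Cantor scheme pushed down to a small submodel $M_{0}$ and the counting bound $|S_{\phi}(M_{0})|\leq |M_{0}|+\aleph_{0}$ coming from definability of $\phi$-types --- and then quote the classical fact (Rudin) that a regular Borel probability measure on a scattered compact Hausdorff space is purely atomic. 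The two arguments are close cousins: finiteness of $R_{\phi}$ is exactly what forbids an infinite splitting tree of $\phi$-formulas, i.e.\ what makes $S_{\phi}(M)$ scattered, so your perfect-set argument is in effect re-deriving that finiteness through the counting characterization of stability. What the paper's version buys is self-containedness (no appeal to the measure-theoretic fact about scattered spaces, no cardinality bookkeeping, and the induction actually exhibits the atoms and their masses); what yours buys is conceptual transparency --- the Fact becomes visibly ``no atomless part'' plus ``the type space is topologically small''. Do tidy the one point you flagged: for uncountable $L$ the inequality $|S_{\phi}(M_{0})|\leq |M_{0}|$ is not enough if $|M_{0}|\geq 2^{\aleph_{0}}$, so you should first pass to a countable sublanguage containing $\phi$ (stability of $\phi$ survives the reduct, and $\phi$-types only see that sublanguage), take $M_{0}$ countable there, and conclude $|S_{\phi}(M_{0})|\leq\aleph_{0}<2^{\aleph_{0}}$, which is the contradiction you need.
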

\begin{proof} We give a proof, for completeness, as this has not been made so explicit in  earlier papers.  We use Shelah's  $\phi$-rank $R_{\phi}(-)$ from Section 3, Chapter 1, of  \cite{Pillay-book} where its basic properties are given (and where really we mean  $\Delta$-rank where $\Delta = \{\phi(x,y), x=z\}$). 
Let $p_{1},.., p_{k}$ be the finitely many complete $\phi$-types of maximal $\phi$-rank $n$ say.  Without loss of generality $p_{1},..,p_{r}$ have positive $\mu$-measures, (say $\alpha_{1},..,\alpha_{r}$, respectively) and $p_{r+1},..,p_{k}$ have $\mu$-measure $0$.  

Working in the space $S_{\phi}(M)$ let $U$ be the complement of $\{p_{1},..,p_{r}\}$, an open set whose $\mu_{x}$-measure is  $\beta = 1- (\alpha_{1} + .. + \alpha_{r})$, which we can assume to be positive  (otherwise already $\mu = \alpha_{1}p_{1} + ... + \alpha_{r}p_{r}$).  Now we can find clopen $U_{1}\subset U_{2} ...\subset U_{i} \subset ....  \subset U$, and 
positive reals $\beta _{1} < \beta _{2} < ....<\beta_{i} < ....$ such that   $\mu(U_{i}) = \beta_{i}$ for all $i$ and $lim_{i\to\infty}\beta_{i} = \beta$. 

Now $U_{1}$ and each $U_{i+1}\setminus U_{i}$ are $\phi$-definable sets of positive measure and with $\phi$-rank $<n$.  So we can apply induction, to write 
each of $\mu|U_{1}$, ..., $\mu|(U_{i+1}\setminus U_{i})$,....  as a suitable $\sum_{j}\gamma_{j}q_{j}$. Putting these together with $\alpha_{1}p_{1} + ... + \alpha_{r}p_{r}$ gives the required expression of $\mu$. 
\end{proof} 

The following is not required, but included for completeness. 
\begin{Corollary} If $\phi(x,y)$ is stable and $\mu_{x}$ is a $\phi$-measure over $M$. Then $\mu$ is smooth if and only if $\mu$ is a weighted sum of realized $\phi$-types, i.e. of the form $\sum_{i}\alpha_{i}tp_{\phi}(a_{i}/M)$  with $a_{i}\in M$.
\end{Corollary}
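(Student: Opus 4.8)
The plan is to reduce the statement, via the Fact, to a clean assertion about when a \emph{single} complete $\phi$-type over $M$ has a unique extension. Writing $\mu = \sum_i \alpha_i p_i$ with $p_i \in S_\phi(M)$ and $\alpha_i>0$ (the Fact), observe that for $N\supseteq M$ the restriction $S_\phi(N)\to S_\phi(M)$ is the dual of the inclusion, and since $\mu$ is purely atomic, any extension $\nu$ of $\mu$ to $N$ must place all of the mass $\alpha_i$ on the fibre $\{q\in S_\phi(N): q|M = p_i\}$, and is conversely determined by a choice of probability measure on each such fibre. Hence $\mu$ is smooth if and only if, for every $N$ and every $i$, the fibre over $p_i$ is a single point, i.e.\ each $p_i$ has a unique extension to a complete $\phi$-type over every $N\supseteq M$. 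Thus everything follows from the \emph{Key Lemma}: for stable $\phi$ and a model $M$, a complete $\phi$-type $p$ over $M$ has a unique extension to a complete $\phi$-type over every $N\supseteq M$ if and only if $p$ is realized in $M$.

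For the easy direction of the Key Lemma (unique extension $\Rightarrow$ realized), note that if $p$ has a unique extension to every $N$ then $p$ decides $\phi(x,b)$ for every $b$ in $\bar M$ (otherwise two undecided completions over $N\ni b$ would give two extensions). So $p$ is not $\phi$-splittable and $R_\phi(p)=0$, where now $R_\phi$ is the rank using $\phi$ alone rather than the $\Delta$-rank of the Fact; consequently some single $\phi$-formula $\chi(x)\in p$ over $M$ already has $R_\phi(\chi)=0$, i.e.\ $\chi$ decides every instance $\phi(x,b)$. Being consistent and over the model $M$, $\chi$ is realized by some $a\in M$, and then $\operatorname{tp}_\phi(a/M)=p$, since $\chi$ determines the whole $\phi$-type. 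Read contrapositively, this is exactly the $\Rightarrow$ direction of the corollary: a non-realized atom has $R_\phi\ge 1$, hence admits a $\phi$-splitting, one side of which forks and so provides a second extension, defeating smoothness.

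The hard direction — realized $\Rightarrow$ unique extension, equivalently $R_\phi(\operatorname{tp}_\phi(a/M))=0$ for $a\in M$ — is where stability is used essentially. With $p=\operatorname{tp}_\phi(a/M)$, the global type $\operatorname{tp}_\phi(a/\bar M)$ is definable over $M$ (by $\phi(a,y)$) and finitely satisfiable in $M$ (realized by $a$), hence is the unique nonforking, and the unique $M$-definable, extension of $p$; a second extension would therefore have to be a \emph{forking} extension, forcing $p$ to be $\phi$-split by some $b$ \emph{outside} $M$. I would rule this out by reflection: the realizations of $p$ form the $M$-type-definable set $[p]$, and $p$ fails to decide $\phi(x,b)$ exactly when $\phi(\,\cdot\,,b)$ is non-constant on $[p]$; since such a splitting is controlled by a formula of $p$ of maximal (finite, as $\phi$ is stable) $\phi$-rank, and $\phi$-rank is a definable property of the parameter, the existence of a splitting parameter is an $L(M)$-condition, which — because $M$ is a \emph{model} — must then be witnessed by some $b'\in M$; but a complete $\phi$-type over $M$ decides every $\phi(x,b')$ with $b'\in M$, a contradiction.

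The main obstacle is making this last step fully rigorous. The naive reflection only produces $b'\in M$ at which \emph{a formula of} $p$ splits, without forcing the $\phi$-rank to drop on the side lying in $p$ (the type may sit on the generic, rank-preserving side), so one must combine reflection with the definability and additivity of Shelah's $\phi$-rank to conclude $R_\phi(p)=0$; the hypothesis that $M$ is a model (not merely a parameter set) is genuinely needed here, as thin non-elementary $M$ furnish realized $\phi$-types that do split externally. This is a local-stability fact for which the $\phi$-rank machinery of the cited \cite{Pillay-book} is the natural tool, and I expect the clean write-up to invoke it rather than reprove it from scratch.
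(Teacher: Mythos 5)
First, a point of order: the paper states this Corollary without proof (it is explicitly ``included for completeness'' and no argument is given), so there is nothing to compare your attempt against; it has to be judged on its own terms. Your reduction of smoothness to the per-atom statement is correct (an extension of $\mu=\sum_i\alpha_ip_i$ must put mass $\alpha_i$ on the fibre over $p_i$, so smoothness is equivalent to each fibre being a singleton over every $N$), and your ``easy'' direction is sound: unique extension forces $R_\phi(p)=0$, finite character of the rank isolates a $\phi$-formula $\chi\in p$ with $R_\phi(\chi)=0$, and any realization of $\chi$ in the model $M$ then realizes $p$. That half of the Corollary is fine.

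The gap you flag in the other direction, however, is not a gap in your write-up but in the statement itself: the implication ``realized $\Rightarrow$ unique extension over every $N$'' is false for stable $\phi$, so no amount of rank machinery will close it. Concretely, let $T$ be the theory of two infinite sorts $P,Q$ with a binary relation $R$ such that $\{(x,y):\neg R(x,y)\}$ is the graph of a partial injection $f$ from $P$ to $Q$ with $\mathrm{dom}(f)$, $P\setminus\mathrm{dom}(f)$, $\mathrm{ran}(f)$, $Q\setminus\mathrm{ran}(f)$ all infinite. Then $R(x,y)$ is $3$-stable (a half-graph of length $3$ would force $f$ to be multivalued). Take any model $M$ and $a\in P(M)\setminus\mathrm{dom}(f)$, so $p:=tp_R(a/M)$ is the all-positive type $\{R(x,c):c\in Q(M)\}$. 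For $d\in\mathrm{ran}(f)$ with $d\notin Q(M)$, the element $a':=f^{-1}(d)$ also realizes $p$ (it satisfies $R(x,c)$ for every $c\in Q(M)$ since $f(a')=d\notin M$), yet $R(a,d)$ holds while $R(a',d)$ fails. Thus the realized type $p$, viewed as a $\{0,1\}$-valued $R$-measure, has two distinct extensions over any model containing $d$, so it is not smooth even though it equals $tp_R(a/M)$ with $a\in M$. (Your own saturation heuristic does not rescue this: the relevant partial type in $y$ has $|M|$ parameters, so saturation of $M$ never applies.) The correct characterization is exactly what your easy direction extracts: $\mu$ is smooth iff each atom $p_i$ contains a $\phi$-formula of $\phi$-rank $0$, i.e.\ a consistent $\phi$-formula over $M$ all of whose realizations have the same global $\phi$-type; such atoms are realized in $M$, but, as the example shows, not conversely. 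So you should not expect to find the ``local-stability fact'' you defer to in the literature --- it isn't one --- and the honest conclusion of your analysis is that the ``if'' direction of the Corollary needs this stronger hypothesis on the atoms.
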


Finally we discuss pseudofiniteness.

\begin{Definition} Let $M$ be an $L$-structure and $A$ an arbitrary (not necessarily definable) subset of a sort $X$ in $M$. We say that $A$ is pseudofinite in $M$ if for any sentence $\sigma$ in the language  $L$ together with an additional preciate symbol $P$ on sort $X$, if $(M,A)\models \sigma$ then there is an $L$-structure $M'$ and finite subset $A'$ of $X(M')$ such that $(M',A')\models \sigma$. 

If $M$ is $1$-sorted and $A$ is $M$ itself then we say that $M$ is pseudofinite.

\end{Definition}

From the definition, finite implies pseudofinite.

\begin{Remark} Suppose that $A$ happens to be definable by a formula $\phi(x,b)$ in the structure $M$. Then pseudofiniteness of $A$ in $M$ is equivalent to; for every $L$-formula $\psi(y)\in tp_{M}(b)$ there is an $L$-structure $M'$ and $b'\in M'$ satisfying $\psi(y)$ such that $\phi(x,b')(M')$ is finite.
\end{Remark}

The following is routine.
\begin{Lemma} For $M$ an $L$-structure and $A$ a subset of a sort $X$ in $M$, the following are equivalent:
\newline
(i) $A$ is pseudofinite in $M$,
\newline
(ii) Let $\Sigma$ be the set of $L(P)$-sentences which are true of every $(M',A')$ where $M'$ is an $L$-structure, and $A'$ is a finite subset of the interpretation of the sort $X$in $M'$. Then $(M,A)\models \Sigma$.
\newline
(iii) $(M,A)$ is elementarily equivalent to some ultraproduct of $L(P)$-structures $(M',A')$ where $A'$ is finite. 

\end{Lemma}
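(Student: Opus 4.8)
The plan is to run the equivalence through the logical manipulation (i) $\Leftrightarrow$ (ii), which is really just the definitions in contrapositive form, and then to close the triangle with (iii) $\Rightarrow$ (ii) and (ii) $\Rightarrow$ (iii); only the last of these requires any construction. Throughout, $L(P)$-sentences and the structures $(M',A')$ with $A'$ finite are taken in the sense already fixed before the statement, and $\Sigma$ is as defined in (ii).

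For (i) $\Leftrightarrow$ (ii) I would simply unwind the definitions. Condition (i) asserts that every $L(P)$-sentence $\sigma$ with $(M,A)\models\sigma$ holds in some finite $(M',A')$. Taking the contrapositive sentence-by-sentence: whenever no finite $(M',A')$ satisfies $\sigma$ --- equivalently whenever $\neg\sigma$ holds in every finite structure, i.e. $\neg\sigma\in\Sigma$ --- we must have $(M,A)\models\neg\sigma$. Since $\sigma$ ranges over all $L(P)$-sentences, so does $\neg\sigma$, and this is precisely the assertion $(M,A)\models\Sigma$ of (ii). No model theory beyond the definitions enters here. For (iii) $\Rightarrow$ (ii), suppose $(M,A)$ is elementarily equivalent to an ultraproduct $\prod_{\mathcal U}(M_i,A_i)$ with each $A_i$ finite. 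Each $\sigma\in\Sigma$ holds in every factor by the defining property of $\Sigma$, so by {\L}o\'{s}'s theorem $\prod_{\mathcal U}(M_i,A_i)\models\sigma$, and hence $(M,A)\models\sigma$ by elementary equivalence; thus $(M,A)\models\Sigma$.

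The one direction requiring actual work is (ii) $\Rightarrow$ (iii), and this is the standard ``theory of finite structures'' ultraproduct argument. I would set $T'=\mathrm{Th}(M,A)$, the complete $L(P)$-theory of $(M,A)$. The key claim is that every finite conjunction $\tau$ of sentences from $T'$ holds in some finite $(M',A')$: otherwise $\neg\tau$ would hold in all finite structures, so $\neg\tau\in\Sigma$, whence $(M,A)\models\neg\tau$ by (ii), contradicting $\tau\in T'$. Granting the claim, I would index by the finite subsets $i$ of $T'$, choose for each $i$ a finite model $(M_i,A_i)\models\bigwedge i$, and take any ultrafilter $\mathcal U$ on this index set extending the filter generated by the sets $\{i:\sigma\in i\}$ for $\sigma\in T'$; these sets have the finite intersection property precisely because each such set is nonempty once $\sigma$ is thrown into the index. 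By {\L}o\'{s}, $\prod_{\mathcal U}(M_i,A_i)\models T'$, and since $T'$ is complete the ultraproduct is elementarily equivalent to $(M,A)$, which is (iii).

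I do not expect a genuine obstacle here --- consistent with the paper's remark that the statement is routine. The only point demanding care is the bookkeeping in (ii) $\Rightarrow$ (iii): ensuring that the chosen ultrafilter genuinely concentrates on a large set of indices for each $\sigma\in T'$ simultaneously, which is exactly what extending the finite-intersection filter $\{\{i:\sigma\in i\}:\sigma\in T'\}$ provides. Everything else is a direct application of the definitions and of {\L}o\'{s}'s theorem.
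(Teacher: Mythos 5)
Your proof is correct, and the paper itself gives no argument at all for this lemma (it is introduced only with ``The following is routine''), so there is nothing to diverge from: your chain (i) $\Leftrightarrow$ (ii) by contraposition, (iii) $\Rightarrow$ (ii) by {\L}o\'{s}, and (ii) $\Rightarrow$ (iii) via the standard ultraproduct over finite subsets of $\mathrm{Th}(M,A)$ is exactly the routine argument the paper has in mind. All steps check out, including the finite intersection property of the sets $\{i:\sigma\in i\}$ and the use of completeness of $\mathrm{Th}(M,A)$ to upgrade satisfaction of the theory to elementary equivalence.
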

We will now talk about the standard model $\mathbb V$ of set theory and saturated elementary extensions ${\mathbb V}^{*}$ of $\mathbb V$. It doesn't really make so much sense, but really we work with some small fragment of set theory including the natural numbers, the reals and all arithmetic operations on them together with cardinality maps for finite sets. The reader can work out for himself or herself the appropriate rigorous statements. 

\begin{Proposition} Suppose $(M,A)$ is pseudofinite. Then there is a (saturated if you wish) elementary extension ${\mathbb V}^{*}$ of $\mathbb V$ and some $(M^{*},A^{*})$ in ${\mathbb V}^{*}$ such that
\newline
(i) $(M^{*}, A^{*})$ is elementarily equivalent to $(M,A)$,
\newline
(ii) $A^{*}$ is finite in the sense of ${\mathbb V}^{*}$, and 
\newline
(iii)  Whenever $\psi$ is a formula of set theory which is true in ${\mathbb V}^{*}$ of $(M^{*},A^{*})$ then there is $(M,A)$ (in the standard model), such that $\psi$ is true of $(M,A)$ and $A$ is finite.
\newline
Moreover suppose that $(M,A)$ is a model of the common theory of $(M_{n}, A_{n})$ for $n<
\omega$ where $A_{n}$ is finite and of increasing size, and $A$ is infinite, then  $(M^{*},A^{*})$ can be chosen to satisfy also
\newline
(iii)' Whenever $\psi$ is a formula of set theory true of $(M^{*},A^{*})$ in ${\mathbb V}^{*}$ then $\psi$ is true of infinitely many $(M_{n},A_{n})$ in $\mathbb V$. 
\end{Proposition}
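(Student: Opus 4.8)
The plan is to realize both $\mathbb{V}^*$ and the pair $(M^*,A^*)$ through a single ultrapower of the set-theoretic universe, and then read off (i)--(iii) from {\L}o\'{s}'s theorem and from elementarity $\mathbb{V}\prec\mathbb{V}^*$. First I would record the one genuinely delicate point: if $(N,B)$ is an internal $L(P)$-structure of $\mathbb{V}^*$ and $\sigma$ is a \emph{standard} $L(P)$-sentence, then $\mathbb{V}^*\models$ ``$(N,B)\models\sigma$'' iff $(N,B)\models\sigma$ holds externally (quantifiers ranging over the external set of elements of $N$). This is proved by induction on the standard, finite syntax of $\sigma$, and it is exactly what allows an internally finite structure to be externally elementarily equivalent to the infinite $(M,A)$.

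For the main statement, by the Lemma (iii) applied to the pseudofinite $(M,A)$ there are finite-$A'$ structures $(M_i',A_i')$ ($i\in I$) and an ultrafilter $\mathcal U$ on $I$ with $(M,A)\equiv\prod_{\mathcal U}(M_i',A_i')$. I would form $\mathbb{V}^*=\mathbb{V}^{I}/\mathcal U$, an elementary extension of $\mathbb{V}$ by {\L}o\'{s}, and set $(M^*,A^*)=[(M_i',A_i')]_{\mathcal U}$, an internal structure of $\mathbb{V}^*$. By the remark above its external $L(P)$-reduct is the model-theoretic ultraproduct $\prod_{\mathcal U}(M_i',A_i')\equiv(M,A)$, giving (i); since each $A_i'$ is finite, {\L}o\'{s} gives that $A^*$ is finite in $\mathbb{V}^*$, giving (ii). For (iii) I would note that no ultraproduct is really needed: if $\psi$ is true of $(M^*,A^*)$ in $\mathbb{V}^*$, then writing $\chi_\psi$ for the set-theoretic sentence ``there is a pair $(N,B)$ with $B$ finite and $\psi((N,B))$,'' we have $\mathbb{V}^*\models\chi_\psi$ (witnessed by $(M^*,A^*)$), whence $\mathbb{V}\models\chi_\psi$ by elementarity, producing a standard pair with $A$ finite. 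The same observation handles ``saturated if you wish'': one may pass to any saturated $\mathbb{V}^{**}\succ\mathbb{V}^*$, since $(M^*,A^*)$ still lies in $\mathbb{V}^{**}$ and all three clauses persist by elementarity of $\mathbb{V}^*\prec\mathbb{V}^{**}$.

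For the ``moreover'' clause I would instead take $I=\N$ with the given sequence and choose $\mathcal U$ nonprincipal so that $\prod_{\mathcal U}(M_n,A_n)\equiv(M,A)$; then with $(M^*,A^*)=[(M_n,A_n)]_{\mathcal U}$, {\L}o\'{s} for the universe ultrapower yields, for any set-theoretic $\psi$ true of $(M^*,A^*)$ in $\mathbb{V}^*$, that $\{n:\psi((M_n,A_n))\}\in\mathcal U$, an infinite set, which is exactly (iii)'. The crux is the existence of such a $\mathcal U$, and this is the step I expect to be the main obstacle. It amounts to showing that every $\sigma\in\mathrm{Th}_{L(P)}(M,A)$ holds in infinitely many $(M_n,A_n)$: for then the sets $\{n:(M_n,A_n)\models\sigma\}$ ($\sigma\in\mathrm{Th}(M,A)$), being closed under finite intersection and all infinite, generate together with the Fr\'echet filter a nonprincipal ultrafilter $\mathcal U$, and completeness of $\mathrm{Th}(M,A)$ upgrades ``$\prod_{\mathcal U}\models\sigma$ for all $\sigma\in\mathrm{Th}(M,A)$'' to elementary equivalence.

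To establish the infinitude I would argue by contradiction, and this is precisely where the hypotheses ``$A_n$ of increasing size'' and ``$A$ infinite'' enter. Suppose some $\sigma\in\mathrm{Th}(M,A)$ held in only finitely many $(M_n,A_n)$. Choosing $k$ larger than $|A_n|$ for each of those finitely many exceptional $n$, and using that $|A_n|\to\infty$, the sentence ``$|P|\geq k\ \rightarrow\ \neg\sigma$'' becomes true in \emph{every} $(M_n,A_n)$: it holds automatically when $|A_n|<k$, and when $|A_n|\geq k$ the index is not exceptional so $\neg\sigma$ holds. Hence this sentence lies in the common theory and so holds in $(M,A)$; since $(M,A)\models\sigma$ this forces $|A|<k$, contradicting that $A$ is infinite. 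Everything else is {\L}o\'{s} and elementarity bookkeeping.
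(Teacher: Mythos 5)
Your proof is correct, and it takes a genuinely different (though closely related) route from the paper's. The paper runs a single compactness argument: it takes the complete diagram of $\mathbb V$, together with all set-theoretic formulas $\psi(y,z)$ true of every pair with finite second coordinate, together with the formulas asserting that $(y,z)$ is $L(P)$-elementarily equivalent to $(M,A)$; finite satisfiability of this theory in $\mathbb V$ is exactly pseudofiniteness, and clause (iii) falls out because the theory of finite pairs has been built into the type of $(M^{*},A^{*})$. You instead construct $\mathbb V^{*}$ as an explicit ultrapower $\mathbb V^{I}/\mathcal U$ along the ultrafilter supplied by Lemma 1.5(iii), reading (i)--(ii) off {\L}o\'{s} and (iii) off elementarity $\mathbb V\prec\mathbb V^{*}$; the cost is that you must (and correctly do) address the internal-versus-external satisfaction point for standard $L(P)$-sentences, which the paper leaves implicit, and the benefit is that (iii)$'$ becomes a one-line consequence of {\L}o\'{s} plus nonprincipality of $\mathcal U$. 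The genuine added content in your write-up is the proof of the ``moreover'' clause, which the paper dismisses as ``also clear'': your argument that if $\sigma\in \mathrm{Th}(M,A)$ held in only finitely many $(M_{n},A_{n})$ then ``$|P|\geq k\rightarrow\neg\sigma$'' would belong to the common theory and force $|A|<k$ is exactly where the hypotheses that the $A_{n}$ are finite of increasing size and that $A$ is infinite enter, and the same lemma would be needed to check finite satisfiability in the paper's compactness version of that clause. Your handling of ``saturated if you wish'' by passing to a saturated $\mathbb V^{**}\succ\mathbb V^{*}$ is also fine, since all three clauses (and (iii)$'$) transfer by elementarity together with the internal/external satisfaction remark.
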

\begin{proof} This is a compactness argument. Consider the complete diagram of $\mathbb V$ together with  set of formulas $\psi(y,z)$ true of every $(M,A)$ in $\mathbb V$ where $M$ is an $L$-structure and $A$ a finite subset of the appropriate sort, as well as the formulas expressing that $(y,z)$ is elementarily equivalent (in $L(P)$) to $(M,A)$. It is finitely satisfiable (in ${\mathbb V}$), so has a (saturated if you wish) model. The moreover statement is also clear. 
\end{proof} 

\begin{Remark} Typically we take ${\mathbb V}^{*}$ to be saturated so $(M^{*},A^{*})$ will be appropriately saturated, so isomorphic to $(M,A)$ assuming the latter was already saturated (assuming some set theory and appropriate choices of degree of saturation).
\end{Remark}

Given ${\mathbb V}^{*}$ and $(M^{*},A^{*})$ as in Proposition 1.6, as $A^{*}$ is finite in the sense of ${\mathbb V}^{*}$, every internal subset $Z$ of $A^{*}$ has a finite cardinality in the sense of ${\mathbb V}^{*}$ (i.e. $|Z|\in\N^{*}$) and we obtain the nonstandard normalized counting measure $\mu^{*}$ on the Boolean algebra of internal subsets of $A^{*}$ which takes $Z$ to $|Z|/|A^{*}|$, a number in $[0,1]^{*}$.  For $Z$ a definable (in $M^{*}$) subset of the ambient sort $X$ in which $A^{*}$ lives, define $\mu^{*}(Z) = \mu^{*}(Z\cap A^{*})$. 
So $\mu^{*}$ gives a ``nonstandard" Keisler measure on the sort $X$ in $M^{*}$, in the sense that the values of $\mu^{*}$ are in the nonstandard unit interval (as well as finite additivity etc). We define $\mu$ to be the standard part of $\mu^{*}$ (restricted to definable sets) and we see that that $\mu$ is Keisler measure on the sort $X$ is the $L$-structure $M^{*}$, which we call the {\em pseudofinite} Keisler measure on $X$ given by $A^{*}$ (and the ambient structure ${\mathbb V}^{*}$).

The following is important (and well-known). It can be proved by an adaptation of the material in section 2.2 of \cite{CP}. In any case we follow the notation and context of Proposition 1.6 and the above construction. 

\begin{Proposition}  Assuming $Th(M^{*})$ is $NIP$, then the psedofinite Keisler measure on the sort $X$ is generically stable, nanely definable over and finitely satisfiable in some small model $M_{0}$. 
\end{Proposition}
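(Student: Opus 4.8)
The plan is to reduce generic stability to the single statement that $\mu$ is \emph{finitely approximated} over a small model, and to obtain the latter from the Vapnik--Chervonenkis theorem applied internally to the hyperfinite measure space $(A^{*},\mu^{*})$. Recall that $\mu$ is finitely approximated over $M_{0}$ if for every $L$-formula $\phi(x,y)$ and every $\epsilon>0$ there are $a_{1},\dots,a_{N}\in M_{0}$ (in the sort $X$) with
$$\sup_{b\in M^{*}}\left|\,\mu(\phi(x,b))-\tfrac{1}{N}|\{i\le N:\ \models\phi(a_{i},b)\}|\,\right|\le\epsilon.$$
Once this is established the conclusion is essentially formal: for fixed $a_{i}\in M_{0}$ the average $b\mapsto\tfrac{1}{N}|\{i:\ \models\phi(a_{i},b)\}|$ factors through a continuous (indeed locally constant) function on $S_{y}(M_{0})$, and these averages converge uniformly to $b\mapsto\mu(\phi(x,b))$ as $\epsilon\to 0$, so the latter is continuous over $M_{0}$, i.e. $\mu$ is definable over $M_{0}$; and if $\mu(\phi(x,b))>0$ then taking $\epsilon<\mu(\phi(x,b))$ forces some $a_{i}\in M_{0}$ to satisfy $\phi(x,b)$, giving finite satisfiability in $M_{0}$. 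Thus definability and finite satisfiability over $M_{0}$ --- generic stability --- follow at once from finite approximation, and it is exactly here, through finite VC dimension, that the $NIP$ hypothesis will be used.

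To produce the finite approximation I would fix $\phi(x,y)$ and argue as follows. Since $Th(M^{*})$ is $NIP$, $\phi$ has some finite VC dimension $d$; as the non-shattering of $(d+1)$-element sets is a first-order property, the internal set system $\{\phi(M^{*},b)\cap A^{*}:\ b\in M^{*}\}$ on $A^{*}$ has VC dimension at most $d$ in the sense of $\mathbb{V}^{*}$. The VC theorem provides, for each standard $\epsilon>0$, a standard $N=N(d,\epsilon)$ such that over any finite probability space a sample of $N$ points uniformly $\epsilon$-approximates the measures of all members of a class of VC dimension $\le d$; this is a statement of $\mathbb{V}$, so by transfer it holds in $\mathbb{V}^{*}$ for the hyperfinite space $(A^{*},\mu^{*})$. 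Hence there are (internally, but genuinely finitely many, since $N$ is standard) points $a_{1},\dots,a_{N}\in A^{*}$ with
$$\sup_{b\in M^{*}}\left|\,\mu^{*}(\phi(x,b))-\tfrac{1}{N}|\{i\le N:\ \models\phi(a_{i},b)\}|\,\right|\le\epsilon$$
holding in $\mathbb{R}^{*}$. Taking standard parts --- legitimate because $N$ is standard, so each frequency is an honest rational in $[0,1]$ and $\mu=\mathrm{st}(\mu^{*})$ --- yields the finite approximation displayed above, with sample points drawn from $A^{*}\subseteq M^{*}$.

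It then remains to assemble the small model. There are only countably many formulas $\phi$ and countably many rational $\epsilon$; each pair contributes finitely many sample points of $M^{*}$, so their union is a small (countable) set, and I take $M_{0}\prec M^{*}$ to be any small model containing it. By construction $\mu$ is finitely approximated over $M_{0}$, whence generically stable over $M_{0}$ by the first paragraph.

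The step I expect to be the main obstacle is the VC/transfer argument: one must check carefully that the $NIP$ hypothesis on $Th(M^{*})$ genuinely bounds the \emph{internal} VC dimension of the family over $A^{*}$, that the VC theorem is invoked in a form whose sample size $N(d,\epsilon)$ is standard (so that transfer returns finitely many honest points rather than a hyperfinite sample), and that passing to standard parts preserves the uniform bound over all $b$ in the (saturated) $M^{*}$. This is precisely the content obtained by adapting Section 2.2 of \cite{CP}.
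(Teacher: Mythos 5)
Your proof is correct and follows essentially the route the paper intends: the paper gives no written proof but defers to an adaptation of Section 2.2 of \cite{CP}, which is exactly the argument you spell out (internal VC theorem on the hyperfinite space $(A^{*},\mu^{*})$ yields finite approximation over a small model, which formally gives definability and finite satisfiability). The details you flag as potential obstacles --- standardness of $N(d,\epsilon)$, transfer of the VC bound to the internal family, and taking standard parts --- all go through as you describe.
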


\section{The distal case}
 The distal regularity theorem \cite{CS1}  is an attractive generalization of a result of Fox et al \cite{Fox} on a strong regularity theorem for semialgebraic graphs.

Our treatment here is related to that of Simon
\cite{Simon-distal}, but we make more explicit the connection with compact domination.

The relevant structural theorem concerns arbitrary smooth Keisler measures. Recall that a Keisler measure $\mu_{x}$ over $M$ is smooth if it has a unique extenson over any $N$ containing $M$. And a global Keisler measure $\mu_{x}$ is said to be smooth over a small submodel $M_{0}$ if $\mu$ is the unique extension over $\bar M$ of $\mu|M_{0}$. 

Here is the domination theorem for smooth measures.  It is more or less tautological. 

\begin{Proposition} Fix $T$, sort $X$, saturated model ${\bar M}$ and  Keisler measure $\mu$ on $X$ over ${\bar M}$. Let $M_{0}$ be a small elementary submodel of ${\bar M}$ and $\pi:X\to S_{X}(M_{0})$ the tautological map. Suppose $\mu$ is smooth over $M_{0}$.  Then for every definable (with parameters from ${\bar M}$) subset $Y$ of $X$ there is a closed subset $E$ of $S_{X}(M_{0})$ of $\mu_{0}$-measure $0$ such that for every $p\in S_{X}(M_{0})$ either $\pi^{-1}(p)\subset Y$ or $\pi^{-1}(p)\cap Y = \emptyset$. 
\end{Proposition}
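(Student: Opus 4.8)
The plan is to take $E$ to be literally the set of ``bad'' types and then to verify the two nontrivial clauses, namely that it is closed and that it is $\mu_{0}$-null. Write $Y = \psi(x,b)(\bar{M})$ for an $L$-formula $\psi$ and a parameter $b\in\bar{M}$, and set
\[
E \;=\; \{\, p\in S_{X}(M_{0}) : \pi^{-1}(p)\cap Y\neq\emptyset \ \text{ and }\ \pi^{-1}(p)\cap(X\setminus Y)\neq\emptyset \,\}.
\]
By construction every $p\notin E$ satisfies $\pi^{-1}(p)\subseteq Y$ or $\pi^{-1}(p)\cap Y=\emptyset$, so the entire content of the statement is contained in the topological and measure-theoretic properties of this $E$.

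To see that $E$ is closed I would pass through the restriction map $r:S_{X}(\bar{M})\to S_{X}(M_{0})$. Since $\bar{M}$ is saturated and $Y$ is $\bar{M}$-definable, $\pi^{-1}(p)$ meets $Y$ exactly when $p(x)\cup\{\psi(x,b)\}$ is consistent, i.e.\ exactly when $p$ lies in the image $r([\psi(x,b)])$ of the clopen set $[\psi(x,b)]\subseteq S_{X}(\bar{M})$; and symmetrically for $X\setminus Y$. Hence $E=r([\psi(x,b)])\cap r([\neg\psi(x,b)])$. As $r$ is continuous and the two bracket sets are compact, their images are closed in the Hausdorff space $S_{X}(M_{0})$, and so their intersection $E$ is closed.

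For $\mu_{0}(E)=0$ I would invoke the characterization of smoothness by inner and outer definable approximation (see \cite{NIPIII}, \cite{Simon-book}): smoothness of $\mu$ over $M_{0}$ gives, for every $\epsilon>0$, $M_{0}$-formulas $\theta^{-}(x),\theta^{+}(x)$ with $\theta^{-}(x)\vdash\psi(x,b)\vdash\theta^{+}(x)$ and $\mu(\theta^{+})-\mu(\theta^{-})<\epsilon$. The point is that $E$ is squeezed between them: if $\theta^{-}(x)\in p$ then every realization of $p$ satisfies $\psi(x,b)$, so $\pi^{-1}(p)\subseteq Y$ and $p\notin E$; if instead $\neg\theta^{+}(x)\in p$ then every realization satisfies $\neg\psi(x,b)$, so $\pi^{-1}(p)\cap Y=\emptyset$ and again $p\notin E$. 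Therefore $E\subseteq[\theta^{+}(x)\wedge\neg\theta^{-}(x)]$, whose $\mu_{0}$-measure equals $\mu(\theta^{+})-\mu(\theta^{-})<\epsilon$. Letting $\epsilon\to 0$ gives $\mu_{0}(E)=0$.

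The only genuinely non-formal input is the approximation characterization of smoothness; everything else is compactness together with a one-line measure estimate, which is why the statement is ``more or less tautological.'' Accordingly I expect the main (though entirely standard) obstacle to be the justification that uniqueness of the extension of $\mu|M_{0}$ to $\bar{M}$ forces this squeezing property. This is the usual measure-extension argument: if some $\psi(x,b)$ admitted no approximation to within $\epsilon$, then the inner bound $\sup\{\mu(\theta^{-}):\theta^{-}\vdash\psi(x,b)\}$ and outer bound $\inf\{\mu(\theta^{+}):\psi(x,b)\vdash\theta^{+}\}$ would differ, and one could extend $\mu|M_{0}$ to two distinct global measures assigning different values to $\psi(x,b)$, contradicting smoothness. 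I would cite this equivalence rather than reprove it, since it is precisely the step at which the smoothness hypothesis is consumed.
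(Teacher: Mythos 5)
Your proof is correct, but it consumes the smoothness hypothesis through a different lemma than the paper does. The paper's argument is a one-step contradiction: if the bad set $E$ had $\mu_{0}$-measure $\alpha>0$, one localizes $\mu_{0}$ to $E$ and observes that the localization (hence $\mu_{0}$ itself, after recombining with the localization to the complement) admits two global extensions, one giving $Y$ measure $1$ and one giving it measure $0$. You instead argue directly, citing the inner/outer definable approximation characterization of smooth measures (Lemma 2.3 of \cite{NIPIII}) to squeeze $E$ inside $[\theta^{+}\wedge\neg\theta^{-}]$ of measure $<\epsilon$. The two routes are cousins --- the approximation lemma is itself proved by exactly the ``two extensions'' argument you sketch in your last paragraph --- but yours outsources that step to a quotable result, which makes the proof shorter at the cost of a heavier citation, and it has the side benefit of producing an explicit $M_{0}$-definable set of small measure containing $E$ (which is in fact what the proof of Corollary 2.2 needs when it chooses the sets $Z_{q}$). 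Your verification that $E$ is closed, via $E=r([\psi(x,b)])\cap r([\neg\psi(x,b)])$ for the continuous restriction map $r$, is also more careful than the paper's parenthetical assertion; note only that this step uses saturation of $\bar M$ relative to $M_{0}\cup\{b\}$ to pass from consistency of $p(x)\cup\{\psi(x,b)\}$ to nonemptiness of $\pi^{-1}(p)\cap Y$, which you correctly flag.
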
  
\begin{proof} 
 Otherwise the (closed) set $E$ of $p\in S_{X}(M_{0})$ such that $p(x)$ is consistent with each of  $x\in Y$ and $x\notin Y$, has $\mu_{0}$-measure $= \alpha > 0$. Let $(\mu_{0})_{E}$ be the localization of $\mu_{0}$ to $E$. Then $(\mu_{0})_{E}$ has two different extensions to a measure over ${\bar M}$, one giving $Y$ measure $1$ and one giving it measure $0$.  It follows that $\mu_{0}$ itself has two different extensions to $\bar M$, contradicting smoothness. 
\end{proof}

The following strong regularity  (or Ramsey-type) statement is a simple compactness argument applied to Proposition 2.1.

\begin{Corollary} Let $(V,W, R)$ be a bipartite graph definable in a structure $M$. Let $\mu$ be a smooth Keisler measure on $V$ over $M$, and $\nu$ an arbitrary Keisler measure on $W$ over $M$. Let $\epsilon > 0$. Then there are partitions $V = V_{1}\cup ..\cup V_{n}$ and $W = W_{1}\cup ...\cup W_{m}$ of $V$, $W$ respectively into definable sets, and a set $\Sigma$ of pairs $(i,j)$ of indices such that
\newline
(i) $(\mu\times \nu )(\cup_{(i,j)\in \Sigma}(V_{i}\times V_{j})) < \epsilon$ and 
\newline
(ii)  for $(i,j)\notin \Sigma$, $V_{i}\times W_{j}$ is homogeneous for $R$, namely $V_{i}\times W_{j}$ is either contained in $R$ or disjoint from $R$. 

\end{Corollary}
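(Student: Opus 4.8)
The plan is to derive the statement from Proposition 2.1 by finitizing the domination conclusion through two applications of compactness of type spaces, one on the $V$-side and one on the $W$-side. First I would pass to a monster model $\bar M \succeq M$ and extend $\mu$ to a global measure smooth over $M$, so that $\mu$ is recorded as a regular Borel measure $\mu_0$ on $S_V(M)$ and $\nu$ as a regular Borel measure $\nu_0$ on $S_W(M)$; let $\pi\colon V \to S_V(M)$ be the tautological map, and apply Proposition 2.1 with $M$ in the role of $M_0$ (its proof uses only uniqueness of the smooth extension and regularity of $\mu_0$, not smallness of the base). Note that $\nu$ will enter only in the final estimate, consistent with its being arbitrary.

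Fix $\epsilon>0$. The core step treats a single $q \in S_W(M)$. Choosing any $b \models q$ in $\bar M$ and applying Proposition 2.1 to the definable set $Y = R(x,b) \subseteq V$ yields a closed $\mu_0$-null set $E_b \subseteq S_V(M)$ outside of which every fibre of $\pi$ is homogeneous for $R(x,b)$. By outer regularity of $\mu_0$ I choose an open $U \supseteq E_b$ with $\mu_0(U) < \epsilon$; the complement $S_V(M)\setminus U$ is compact, and each of its points $p$ carries a basic clopen neighbourhood $[\theta_p]$ with $\theta_p \in p$ over $M$ and $\theta_p(V) \subseteq R(x,b)$ or $\theta_p(V)\cap R(x,b)=\emptyset$. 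Extracting a finite subcover and passing to Boolean combinations produces a clopen partition of $S_V(M)$ whose non-exceptional pieces are $R(x,b)$-homogeneous and whose exceptional (clopen) piece lies inside $U$, hence has $\mu_0$-measure $<\epsilon$.

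The crucial manoeuvre — and the step I expect to be the main obstacle — is to make this partition work uniformly for $q'$ near $q$, by replacing the external parameter $b$ with a formula over $M$. For each homogeneous piece $[\theta]$, the relation $\theta(V) \subseteq R(x,b)$ (or the disjointness) is, by compactness, witnessed by some $\rho \in q$ with $\models \forall x\,\forall y\,(\theta(x)\wedge\rho(y) \to R(x,y))$ (respectively $\to \neg R(x,y)$). Conjoining the finitely many witnesses gives a single $\rho \in q$, and on the clopen neighbourhood $[\rho]$ of $q$ the same implications force the same pieces to remain homogeneous for every $b' \models q'$ with $q' \in [\rho]$, while the exceptional piece keeps $\mu_0$-measure $<\epsilon$. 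Thus every $q$ has a clopen neighbourhood carrying a fixed finite homogeneous-plus-small-exceptional partition of $S_V(M)$.

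Finally I would invoke compactness of $S_W(M)$: the neighbourhoods $[\rho]$ cover it, so finitely many $[\rho^{(1)}],\dots,[\rho^{(k)}]$ suffice; disjointifying them gives an $M$-definable partition $W = W_1 \cup \dots \cup W_m$, and the common refinement of the associated $S_V(M)$-partitions gives $V = V_1 \cup \dots \cup V_n$ (both translated from clopen partitions of the type spaces back into $M$-definable partitions of the vertex sets, with $\mu$ and $\nu$ of a piece equal to $\mu_0$, resp. $\nu_0$, of the corresponding clopen set). Declaring $(i,j)\in\Sigma$ precisely when $V_i$ lies in the exceptional part associated to the neighbourhood containing $W_j$ makes each non-exceptional rectangle $R$-homogeneous by construction, giving (ii). For (i), the exceptional $V_i$ for a fixed $j$ have total $\mu$-measure $<\epsilon$, so $(\mu\times\nu)(\bigcup_{(i,j)\in\Sigma} V_i\times W_j) = \sum_j \nu(W_j)\sum_{i\colon (i,j)\in\Sigma}\mu(V_i) < \sum_j \nu(W_j)\,\epsilon = \epsilon$, as required. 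This route sidesteps any appeal to Fubini for the product measure $\mu_0\times\nu_0$, and the attendant measurability of the two-dimensional bad set, which is the expected pitfall of a more direct argument.
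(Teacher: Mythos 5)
Your proposal is correct and follows essentially the same route as the paper's proof: apply Proposition 2.1 fibrewise to $R(x,b)$, enlarge the null set $E_b$ to a definable set of measure $<\epsilon$, use compactness of $S_V$ to get a finite homogeneous clopen partition off that set, use definability over the base (a formula in $q$) to make it uniform on a neighbourhood of $q$, cover $S_W$ by finitely many such neighbourhoods, refine, and sum $\epsilon\nu(W_j)$ over $j$. The only cosmetic difference is that you work over $M$ itself rather than descending to a small submodel $M_0$, which is harmless since, as you note, the argument only uses uniqueness of the smooth extension.
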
 

\begin{proof}  We may assume $M$ to be a saturated. Let $M_{0}$ be a small elementary submodel of $M$ such that $\mu$ is smooth over $M_{0}$ and $R(x,y)$ is definable over $M_{0}$. We make use of Proposition 2.1 with $X = V$. 

Fix $\epsilon > 0$. For any $b$,  Let $E_{b}$ the closed $\mu_{0}$-measure $0$ subset of $S_{V}(M_{0})$ outside of which each fibre of $\pi$  is either contained in or disjoint from $R(x,b)$. Clearly $E_{b}$ depends only on $tp(b/M_{0})$ and so we write as $E_{q}$.  Let $Z_{q}$ be an $M_{0}$-definable set containing $E_{q}$ and with $\mu_{0}$-measure $<\epsilon$.  By compactness we can partition $V\setminus Z_{q}$ into $M_{0}$-definable sets $V_{q,1},...,V_{q,n_{q}}$ such that for each $i$, $\pi^{-1}(V_{q,i})$ is either contained in $R(x,b)$ (for some/all $b$ realizing $q$) or disjoint from $R(x.b)$ (for some/all $b$ realizing $q$). We can now, by compactness,  replace $q$ by a formula (or $M_{0}$-definable set) $W_{q}$ in $q$, so that for each $i\leq n_{q}$ either $V_{q,i}$ is contained in $R(x,b)$ for all $b\in W_{q}$,  or $V_{q,i}$ is disjoint from $R(x,b)$ for all $b\in W_{q}$.

Doing this for each $q$ and applying compactness gives us a partition $W_{q_{1}},...W_{q_{m}}$ of $W$ into $M_{0}$-definable sets, and for each $j=1,..,m$ a partition $V = V_{q_{j},1} \cup V_{q_{j},2} \cup ...\cup V_{q_{j},n_{q_{j}}}\cup Z_{q_{j}}$, such that $\mu_{0}(Z_{q_{j}}) = 0$ for all $j$,  and for all $j,i$, 
\newline
(*)  $\pi^{-1}(V_{q_{j},i})$ is either contained in $R(x,b)$ for all $b\in W_{q_{j}}$  or is disjoint from $R(x,b)$ for all $b\in W_{q_{j}}$.

\vspace{2mm}
\noindent
Let $V_{1},...,V_{t}$ be a common refinement of this finite collection of partitions of $V$.  And we claim that this partition, together with the partition $W_{q_{1}},..,W_{q_{m}}$ of $W$ is as required. We have to identify the exceptional set $E$ of pairs. So let $E = \{(i,q_{j}):V_{i}\subseteq Z_{q_{j}}\}$.  For each $q_{j}$, $\cup \{V_{i}\times W_{q_{j}}:V_{i}\subseteq Z_{q_{j}}\} = Z_{q_{j}}\times W_{q_{j}}$ which has $\mu\times \nu$ measure $<\epsilon\nu(W_{q_{j}})$. So summing over the $q_{j}$ we get $(\mu\times\nu)(\cup_{(i,q_{j})\in E}(V_{i}\times W_{q_{j}})) < \epsilon$.
And for $(i,q_{j})\notin E$, $V_{i}$ will be contained in $V_{q_{j},i}$ for some $i$, so by (*) $V_{i}\times W_{q_{j}}$ is either contained in or disjoint from $R$. 
\end{proof}

The notion of a {\em distal} first order theory $T$ was introduced by Pierre Simon in his thesis (see \cite{Simon-book}). One of the characterizations of distality is that $T$ has $NIP$ and every generically stable measure is smooth.  Among distal theories are  $o$-minimal theories (such as $RCF$), the theory of $\Q_{p}$, and $Th(\Z,+,<)$.

So here is the distal regularity theorem, stated for suitable families of finite graphs. 
\begin{Proposition}  Let ${\cal G} = (G_{i}:i\in I)$ be a family of finite (bipartiite) graphs $G = (V,W, R)$ such that one of the following happens:
\newline
(i) The graphs are uniformly definable in some model $M$ of a distal theory $T$,
\newline
(ii) For some model $M$ of some distal theory $T$, there is a graph $(V,W,R)$ definable in $M$ such that ${\cal G}$ is the family of finite (induced) subgraphs of $(V,W, E)$,
 or
\newline
(iii)  Every model $(V,W,R)$ of the common theory of the $G_{i}$'s is interpretable in a model of some distal theory.
\newline
THEN, for any $\epsilon$ there is $N_{\epsilon}$, such that for every $(V,W,R)\in {\cal G}$, there are partitions $V = V_{1}\cup .. \cup V_{n}$, and $W = W_{1} \cup .. \cup W_{m}$, with $n, m < N_{\epsilon}$ such that for some some ``exceptional" set $E$ of pairs $(i,j)$ (with $1\leq i \leq n$ and $1\leq j \leq m$),
\newline
the cardinality of $\cup_{(i,j)\in E}(V_{i}\times W_{j})$ is $<\epsilon|V||W|$, and for all $(i,j)\notin E$, $V_{i}\times W_{j}$ is either contained in $R$ or disjoint from $R$.
\end{Proposition}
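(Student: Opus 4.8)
The plan is to argue by contradiction, reducing the finite statement to the domination-based regularity of Corollary 2.2 through compactness and the nonstandard counting measure. Suppose the conclusion fails: there is a fixed $\epsilon > 0$ such that for every $N \in \N$ some graph $H_{N} = (V_{N}, W_{N}, R_{N}) \in {\cal G}$ admits no partition of the required form with fewer than $N$ pieces on each side. Fixing a nonprincipal ultrafilter ${\cal U}$ on $\N$, I form the ultraproduct $(V^{*}, W^{*}, R^{*}) = \prod_{N}(V_{N}, W_{N}, R_{N})/{\cal U}$, which is pseudofinite. Working inside a saturated nonstandard universe ${\mathbb V}^{*}$ as in Proposition 1.6, this graph is internally finite, and I equip $V^{*}$ and $W^{*}$ with their nonstandard normalized counting measures; their standard parts give Keisler measures $\mu$ on $V^{*}$ and $\nu$ on $W^{*}$.

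First I would verify that in each of the three cases a distal theory is available. In case (i) the $H_{N}$ are uniformly definable in models of a fixed distal $T$, so $(V^{*}, W^{*}, R^{*})$ is uniformly definable in an ultraproduct of these models, which is again a model of $T$, since distality is a property of the complete theory and hence passes to ultraproducts; case (ii) is analogous once the induced subgraphs are viewed as definable slices. In case (iii) the ultraproduct, being a model of the common theory of the $H_{N}$, is interpretable in a model of some distal theory, and here one must ensure the interpretation is available uniformly so that \L o\'{s}'s theorem can be run in reverse at the end.

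The structural heart of the argument comes next. Since the ambient theory is distal it is in particular $NIP$, so by Proposition 1.7 the pseudofinite measure $\mu$ is generically stable; and as the theory is distal, every generically stable measure is smooth, whence $\mu$ is smooth over some small submodel $M_{0}$. I now apply Corollary 2.2 to the definable bipartite graph $(V^{*}, W^{*}, R^{*})$ with the smooth measure $\mu$ on $V^{*}$ and the arbitrary measure $\nu$ on $W^{*}$. This produces definable partitions $V^{*} = V_{1} \cup \dots \cup V_{n}$ and $W^{*} = W_{1} \cup \dots \cup W_{m}$, with $n$ and $m$ standard integers, together with an exceptional set $\Sigma$ of index pairs satisfying $(\mu \times \nu)(\bigcup_{(i,j)\in\Sigma} V_{i} \times W_{j}) < \epsilon$, while every non-exceptional $V_{i} \times W_{j}$ is contained in or disjoint from $R^{*}$.

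Finally I would descend this to ${\cal U}$-almost every $H_{N}$. The partitions and the homogeneity relations are given by formulas over the ambient model, so by \L o\'{s}'s theorem the same formulas define partitions of $V_{N}$ and $W_{N}$ into $n$ and $m$ pieces respectively, with the non-exceptional products homogeneous for $R_{N}$, for ${\cal U}$-almost all $N$. Since the product $\mu \times \nu$ agrees on finite unions of definable rectangles with the standard part of the internal counting measure on $V^{*} \times W^{*}$, the strict inequality above transfers to $|\bigcup_{\Sigma} V_{i}^{(N)} \times W_{j}^{(N)}| < \epsilon |V_{N}||W_{N}|$ for ${\cal U}$-almost all $N$. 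Hence almost every $H_{N}$ has a good $\epsilon$-partition into at most $\max(n,m)$ pieces per side; choosing any $N \geq \max(n,m)$ in ${\cal U}$ contradicts the choice of $H_{N}$. The main obstacle is precisely this last descent: one must check that the standard part of the internal counting measure genuinely converts the measure inequality into the counting inequality, and --- especially in case (iii) --- that the distal structure yielding smoothness is uniformly interpretable, so that the definable partitions pull back to honest partitions of the finite graphs.
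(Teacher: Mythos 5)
Your proposal is correct and follows essentially the same route as the paper: argue by contradiction, pass to a pseudofinite/nonstandard model (the paper uses Proposition 1.6 and its ``moreover'' clause where you use an ultraproduct and \L o\'{s}, which amounts to the same transfer), note that the pseudofinite counting measure is generically stable (Proposition 1.8) hence smooth by distality, apply Corollary 2.2, and push the definable partitions and the measure inequality back down to cofinally many of the finite counterexamples. The only cosmetic difference is that the paper applies Corollary 2.2 with $\epsilon/2$ and inserts an intermediate $\delta$ to transfer the measure bound, whereas you transfer the strict inequality directly from the standard part; both are fine.
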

\begin{proof}  Context (ii) is the one dealt with in \cite{CS1} and which generalizes \cite{Fox}.   Note that Context (i) would be vacuous when $T$ is $o$-minimal as we have finite bounds on the cardinalities of uniformly definable finite sets, but for the $p$-adics and/or Presburger, it is nonvacuous. 

The proof of the proposition uses Propositions 1.6 and 1.8 (with a possible variant in Context (iii)).
 We focus here on Context (i).  Suppose the conclusion fails. So for some fixed $\epsilon$, no finite $N$ works. So we can find  $N_{1} < N_{2} < ....$, and counterexamples $G_{N_{1}}$ for $N_{1}$, $G_{N_{2}}$ for $N_{2}$ , etc.  in ${\cal G}$ with the cardinalities of the vertex sets increasing.  Proposition 1.6 gives us a (saturated)  model $M^{*}$ of $T$ in some ${\mathbb V}^{*}$ and definable $G^{*} = (V^{*},W^{*},R^{*})$ in $M^{*}$ such that $V^{*}$, $W^{*}$ are finite in the sense on ${\mathbb V}^{*}$ and the moreover clause (iii)' holds.
Let $\mu^{*}$, $\nu^{*}$  be the nonstandard normalized counting measures on $V^{*}$ and $W^{*}$,  given by the construction following Remark 1.7, and let $\mu$ and  $\nu$\,  be the corresponding pseudofinite Keisler measures. By 1.8 $\mu$ is generically stable, so smooth as $Th(M^{*})$ is distal. 
Apply Corollary 2.2 to $(V^{*},W^{*},R^{*})$ with say $\epsilon/2$, to get (definable) partitions $V_{1}^{*},..,V_{n}^{*}$ of $V$ and $W_{1}^{*},..,W_{m}^{*}$ of $W$, and an exceptional set $E$ of pairs $(i,j)$ satisfying the conclusions of 2.2.  Choose $\epsilon/2 < \delta < \epsilon$, and we can express the existence of the partitions and that $(\mu^{*}\times \nu^{*})(\cup_{(i.j)\in E}(V_{i}^{*}\times W_{j}^{*})) < \delta$ and that for ($(i,j)\notin E$, $V_{i}^{*}\times W_{j}^{*}$ is homogeneous for $R^{*}$ by the truth of formula $\psi$ of set theory for $M^{*},G^{*}$ in ${\mathbb V}^{*}$.  The moreover clause of Proposition 1.6 tells that $\psi$ is true for infinitely many of the $(M,G_{N_{k}})$ in $\mathbb V$, and for $N_{k} > n,m$ we get a contradiction. 

\end{proof}

\section{The NIP case}
The regularity lemma for finite graphs $(V,W, R)$ where the relation $R$ is ``$k-NIP$"  (or $VC$-dimension bounded by approximately $k$) has a nice and elementary direct proof in \cite{CS2} (in the greater generality of hypergraphs). However we want to again deduce it just from a domination statement, so we work in the context where such statements are currently available, namely inside a $NIP$ theory.

We first recall the notion ``$\mu$-wide". If $\mu_{x}$ is a (say global) Keisler measure and $\Sigma(x)$ is a partial type over a small set, we say that $\Sigma(x)$ is $\mu$-wide if every finite conjunction of formulas in $\Sigma$ has $\mu$ measure $>0$.  

Again we start with the (generic) domination theorem for generically stable measures.

\begin{Proposition}
 Suppose $T$ is $NIP$. Let $\mu$ be a global generically stable measure on the definable set (or sort) $X$ and assume $\mu$ does not fork over $M_{0}$ (so is definable over $M_{0}$). 
Let $\pi: X \to S_{X}(M_{0})$ be the tautological map, and let $\mu_{0}$ be the induced measure on $S_{X}(M_{0})$. Then for every definable (with parameters from $\bar M$)  subset $Y$ of $X$ there is a closed set $E\subseteq S_{X}(M_{0})$ of $\mu_{0}$ measure $0$ such that for each $p\in S_{X}(M_{0})\setminus E$, not both $p\cup ``x\in Y"$ and $p\cup``x\notin Y$ are $\mu$-wide. 
\end{Proposition}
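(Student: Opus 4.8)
The plan is to mimic the structure of the smooth case (Proposition 2.1), replacing the clean dichotomy ``contained in $Y$ or disjoint from $Y$'' by the weaker $\mu$-wide dichotomy, and replacing the uniqueness-of-extension argument by the characterization of generic stability. First I would introduce the bad set $E = \{p \in S_X(M_0) : \text{both } p\cup``x\in Y"\text{ and } p\cup``x\notin Y"\text{ are }\mu\text{-wide}\}$ and check that $E$ is closed. Closedness is the routine part: being $\mu$-wide means every finite conjunction from the type has positive $\mu$-measure, and since $Y$ is a fixed definable set the conditions ``$p \cup ``x\in Y"$ is $\mu$-wide'' and ``$p\cup``x\notin Y"$ is $\mu$-wide'' are each given by intersections of conditions of the form ``$\mu(\theta(x)\wedge (x\in Y)) > 0$'' (equivalently, the complement of ``$\mu(\cdots)=0$''); one argues these cut out a closed set in the type space.

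The heart of the matter is to show $\mu_0(E) = 0$. Here I would exploit the definability and finite-satisfiability of $\mu$ over $M_0$ that comes with generic stability, together with the Fubini/symmetry property of generically stable measures (the fact that $\mu$ commutes with itself and that its fibrewise/average behavior is well-controlled). The idea is that for $p$ in the support that is problematic, both $Y$ and its complement meet the fibre $\pi^{-1}(p)$ in a $\mu$-wide way; integrating the ``width defect'' against $\mu_0$ over $E$ should, via definability of $\mu$, produce a definable function on $S_X(M_0)$ witnessing that on a positive-measure set the conditional measure of $Y$ along fibres is strictly between $0$ and $1$. One then derives a contradiction with the assumption that $\mu$ does not fork over $M_0$, i.e.\ that $\mu$ is the (unique, by generic stability) nonforking/definable extension of $\mu_0$: a positive-measure set of fibres on which $Y$ is genuinely ``split'' would give $\mu$ a second, distinct extension or violate the self-averaging identity, exactly as the two distinct extensions gave the contradiction in the smooth case.

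The main obstacle I anticipate is precisely this last reduction. In the smooth case the argument is almost tautological because smoothness is uniqueness of extension outright; here generic stability only gives uniqueness of the \emph{nonforking} extension plus definability, so I cannot directly manufacture ``two different extensions.'' The work will be in showing that a positive-measure exceptional set $E$ yields a bona fide nonforking extension of $\mu_0$ distinct from $\mu$ (or contradicts the Fubini symmetry of the generically stable $\mu$ with itself). Concretely, I would localize $\mu_0$ to $E$, push this up to two candidate global measures over $\bar M$ by assigning $Y$ the two extreme values along the split fibres, verify using definability that both are nonforking extensions of the localized $\mu_0$, and conclude they contradict uniqueness. Making the ``assign $Y$ measure $1$ versus measure $0$ along these fibres'' construction actually produce nonforking (not merely finitely additive) extensions, and checking this respects the $\mu$-wide hypothesis rather than the stronger full-fibre containment, is where the real care is needed.
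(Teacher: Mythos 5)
Your setup is right: the exceptional set $E$ you define is the same one the paper uses, and your closedness argument is fine (for each formula $\theta(x)$ over $M_0$ the condition $\mu(\theta(x)\wedge ``x\in Y")>0$ is clopen-by-clopen checkable, and $E$ is an intersection of such conditions, hence closed). But the heart of the proposition is the claim $\mu_0(E)=0$, and your proposal does not actually prove it; it sketches a strategy and then explicitly defers the decisive verification (``where the real care is needed''). That deferred step is not a routine adaptation of the smooth case. In the smooth case any two distinct extensions of the localized measure give an immediate contradiction; here you must produce two distinct \emph{global nonforking} extensions of $\mu_0$ from the hypothesis that a positive-$\mu_0$-measure set of fibres is split in the $\mu$-wide sense, and the construction you gesture at (``assign $Y$ the two extreme values along the split fibres'') requires either a measurable selection of nonforking completions $p\mapsto p'$ over $E$ or an appeal to the Fubini/self-symmetry of $\mu$ -- in effect you would be reproving the generic compact domination theorem for generically stable measures from scratch.

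The paper sidesteps exactly this by quoting that theorem as a black box: Theorem 5.4 of \cite{NIPIII} says the space ${\cal P}$ of global types not forking over $M_0$ is dominated by $S_X(M_0)$ via the restriction map, i.e.\ the set of $p\in S_X(M_0)$ whose preimage in ${\cal P}$ meets both $Y$ and its complement has $\mu_0$-measure $0$. The only new content in the paper's proof is the bridge from $\mu$-wideness to ${\cal P}$: since $\mu$ does not fork over $M_0$, every formula of positive $\mu$-measure does not fork over $M_0$, so a $\mu$-wide partial type $p\cup\{``x\in Y"\}$ does not fork over $M_0$ and hence extends to some $p'\in{\cal P}$; likewise for the complement; so a $p$ outside the measure-zero set from Theorem 5.4 cannot have both sides wide. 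Your proposal contains the germ of this bridge (you invoke nonforking at the end) but routes everything through an unproved direct argument instead. To repair the proposal, either cite the domination theorem of \cite{NIPIII} and supply only the wide-implies-nonforking observation, or else carry out in full the construction of the two nonforking extensions, which is substantially harder than the two sentences you allot to it.
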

\begin{proof}  We deduce this formally from the basic results in \cite{NIPIII}.  First by Proposition 3.3 of \cite{NIPIII}, $\mu$ is the the unique global nonforking extension of its restriction to $M_{0}$. 
Let ${\cal P}$ the space of global complete types $p(x)$ which do not fork over $M_{0}$, and let $\pi'$ be the restriction map from ${\cal P}$ to  $S_{X}(M_{0})$. Then by Theorem 5.4, of \cite{NIPIII}, ${\cal P}$ is dominated by $((S_{X}(M_{0}),\pi',\mu)$ in the sense that for any formula $\phi(x)$ over ${\bar M}$ the set $E$ of $p\in S_{X}(M_{0})$ such that $\pi'^{-1}(p)$ intersects both (the clopen determined by) $\phi(x)$ and (the clopen determined by $\neg\phi(x)$, has $\mu_{0}$ measure $0$. Note that $E$ is closed. Recall that we  $\pi:X\to S_{X}(M_{0})$ takes $a\in X({\bar M})$ to $tp(a/M_{0})$.  Now suppose that $p(x)\in S_{X}(M_{0})\setminus E$.  If $p(x)\cup\{\phi(x)\}$ is $\mu$-wide, then as $\mu$ does not fork over $M_{0}$, $p(x)\cup\{\phi(x)\}$ does not fork over $M_{0}$ so extends to some $p'\in {\cal P}$. Likewise if $p(x)\cup\{\neg\phi(x)\}$ is $\mu$-wide, it extends to some $p''\in {\cal P}$. So  as $p\notin E$, not both can happen. 
\end{proof}

A simple compactness argument applied to Proposition 3.1 again gives a strong regularity theorem. 

\begin{Corollary} Suppose $T$ is $NIP$ and $(V,W, R)$ is a graph definable in a model $M$ of $T$. Let $\mu$ be a generically stable measure on $V$ over $M$ and $\nu$ any Keisler measure on $W$ over $M$. Fix $\epsilon > 0$. Then there are partitions $V = V_{1}\cup....\cup V_{n}$ and $W = W_{1}\cup ..\cup W_{m}$ of $V, W$ into definable sets, and an exceptional set $E$ of pairs $(i,j)$ of indices such that
\newline
(i) The $\mu\times \nu$ measure of $\cup_{(i,j)\in E}V_{i}\times W_{j}$ is $< \epsilon$, and
\newline
(ii)  For any $(i,j)\notin E$, either $(\mu\otimes\nu)((V_{i}\times W_{j})\cap R) <\epsilon\mu(V_{i})(\nu(W_{j})$ or  
$(\mu\otimes\nu)((V_{i}\times W_{j})\setminus R) <\epsilon\mu(V_{i})(\nu(W_{j})$
\end{Corollary}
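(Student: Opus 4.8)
The plan is to mimic the proof of Corollary 2.2, using Proposition 3.1 in place of Proposition 2.1, with the difference that the homogeneity conclusion is now replaced by a one-sided density bound. As before, I would first reduce to the case where $M$ is saturated, pass to a small submodel $M_0$ over which $\mu$ does not fork (so that $\mu$ is the unique nonforking extension of its restriction, by Proposition 3.3 of \cite{NIPIII} as invoked in 3.1) and over which $R(x,y)$ is definable, and then set up the tautological map $\pi:V\to S_V(M_0)$ with induced measure $\mu_0$.

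The key step is to apply Proposition 3.1 with $Y = R(x,b)$ for each $b$. This produces, for each complete type $q = \mathrm{tp}(b/M_0)$, a closed $\mu_0$-measure-$0$ set $E_q\subseteq S_V(M_0)$ outside of which no fibre $\pi^{-1}(p)$ meets both $R(x,b)$ and $\neg R(x,b)$ in a $\mu$-wide set. The point is that \emph{$\mu$-wide} is exactly the notion that converts into a density statement after localizing: for $p\notin E_q$, at least one of $p\cup\{x\in R(x,b)\}$, $p\cup\{x\notin R(x,b)\}$ is \emph{not} $\mu$-wide, i.e.\ some formula in it has $\mu$-measure $0$. I would then enclose $E_q$ in an $M_0$-definable set $Z_q$ of $\mu_0$-measure $<\epsilon$, and partition $V\setminus Z_q$ by compactness into $M_0$-definable pieces $V_{q,i}$ on which one side of $R(x,b)$ has measure $0$ uniformly for $b$ in a formula $W_q\in q$, exactly as in the proof of 2.2. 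Refining over finitely many $q$'s yields the partition $V_1,\dots,V_t$ of $V$ and $W_{q_1},\dots,W_{q_m}$ of $W$, with exceptional set $E=\{(i,q_j):V_i\subseteq Z_{q_j}\}$; the measure estimate $(\mu\times\nu)(\bigcup_E V_i\times W_{q_j})<\epsilon$ is computed fibrewise over each $W_{q_j}$ as before.

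The one genuinely new bookkeeping point, and the step I expect to require the most care, is turning ``not $\mu$-wide'' into the conclusion (ii), which is stated with $\mu\otimes\nu$ and the product-density threshold $\epsilon\mu(V_i)\nu(W_j)$ rather than with outright measure $0$. For $(i,j)\notin E$ one knows that on $V_i$ the set $R(x,b)$ (for $b\in W_{q_j}$) is, up to a $\mu$-null set, either all of $V_i$ or disjoint from $V_i$; integrating over $b$ against $\nu$ using the Fubini/definability of $\mu$ for generically stable measures (so that $(V_i\times W_j)\cap R$ is measured as $\int_{W_j}\mu(R(x,b)\cap V_i)\,d\nu(b)$) gives that the relevant conditional density is $0$, which is certainly below the threshold $\epsilon\mu(V_i)\nu(W_j)$. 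The mild subtlety is that the measure-$0$ side may depend on $b$ across the piece, but the uniform homogeneity achieved on $V_{q_j,i}$ via the formula $W_{q_j}$ fixes which side is null for all $b\in W_{q_j}$ simultaneously, so the integral bound is immediate. With these integrations in place the two displayed alternatives in (ii) follow, completing the argument.
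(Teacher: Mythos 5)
Your overall strategy (reduce to a small model $M_0$, apply Proposition 3.1 fibrewise over types $q\in S_W(M_0)$, pass to definable neighbourhoods $W_q$, take a common refinement, and integrate) matches the paper's, but there is a genuine gap at exactly the step you flag as delicate, and the fix requires both a different calibration of the error and a larger exceptional set than the one you propose. The problem is that the statement ``$\mu(V_{q,i}\cap R(x,b))=0$'', which you obtain for $b$ realizing the \emph{complete type} $q$, is not a definable condition on $b$: the set of such $b$ is only type-definable (the intersection of the conditions $\mu(V_{q,i}\cap R(x,b))<1/k$, each of which is definable by definability of $\mu$ over $M_0$). So when you replace $q$ by a formula $W_q\in q$, you cannot keep the conditional density equal to $0$ for all $b\in W_q$; you can only arrange $\mu(V_{q,i}\cap R(x,b))<\delta\,\mu(V_{q,i})$ for a prescribed $\delta>0$. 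Your claim that ``the relevant conditional density is $0$'' for all $b\in W_{q_j}$ is therefore false in general, and conclusion (ii) does not follow immediately by integration.

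Worse, even the approximate bound does not survive the common refinement: if $V_t\subseteq V_{q,i}$ is a piece of the refinement with $\mu(V_t)$ much smaller than $\mu(V_{q,i})$, then $\mu(V_t\cap R(x,b))<\delta\,\mu(V_{q,i})$ gives no control of the form $<\epsilon\,\mu(V_t)$. This is why the paper takes the threshold to be $(\epsilon^{2}/2)\mu(V_{q,i})$ and, crucially, why its exceptional set is \emph{not} just $\{(i,q_j):V_i\subseteq Z_{q_j}\}$: it also contains the pairs $(i,q_j)$ for which the refined piece $V_i$ meets the ``bad set'' $B$ (the union over $k$ of the thin sides of the $V_{q_j,k}$) in relative $\mu\otimes\nu$-density at least $\epsilon$. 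A Markov-inequality argument (Claim 2 in the paper), using $(\mu\otimes\nu)(B)<(\epsilon^{2}/2)\nu(W_{q_j})$, shows these extra pairs contribute total measure $<(\epsilon/2)\nu(W_{q_j})$, which is what preserves condition (i). Without enlarging $E$ in this way, conclusion (ii) can fail for pairs outside your $E$, so the argument as written does not close.
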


\begin{proof}  We follow the proof of Corollary 2.2, but with $\epsilon$-homogeneous in place of homogeneous (using definability over $M_{0}$ of $\mu$), and paying slightly more attention to the exceptional set. 

Again assume $M$ to be saturated, and suppose $\mu$ does not fork over $M_{0}$.  Fix $\epsilon > 0$. We use Proposition 3.1 with $X = V$.  For each $q\in S_{W}(M_{0})$ we find closed $E_{q}\subseteq S_{V}(M_{0})$ of $\mu_{0}$-measure $0$, such that for each $p\in S_{V}(M_{0})\setminus E_{q}$ and some (any) $b$ realizing $q$ at most one of $p(x)\cup\{R(x,b)\}$, $p(x)\cup\{\neg R(x,b)\}$ is $\mu$-wide.  Let $Z_{q}$ be an $M_{0}$-definable set containg $E_{q}$ and of $\mu_{0}$-measure $<\epsilon/2$.  By compactness we can partition $V\setminus E_{q}$ into $M_{0}$-definable sets $V_{q_{1}},..,V_{q,n_{q}}$ such that for each $i$, either $\mu(V_{q,i}\cap R(x,b)) = 0$ (for some/all $b$ realizing $q$), or $\mu(V_{q_{i}}\setminus R(x,b)) = 0$ for some/all $b$ realizing $q$).  We may assume that $\mu(V_{q,i})) > 0$ for each $i$ (otherwise just add it to $Z_{q}$).

Now we use definability of $\mu$ over $M_{0}$ to find an $M_{0}$-definable set $W_{q}$ containing $q$ such that for each $i=1,...,n_{q}$ exactly one of the following holds:

$(i)_{q}$ forall $b\in W_{q}$, $\mu(V_{q,i}\cap R(x,b)) < (\epsilon^{2}/2)\mu(V_{q,i})$.
\newline
$(ii)_{q}$  for all $b\in W_{q}$, $\mu(V_{q,i}\setminus R(x,b)) <(\epsilon^{2}/2)\mu(V_{q,i})$. 

By compactness we can find $q_{1},..,q_{m}$ such that $W_{q_{1}},...,W_{q_{m}}$  partition $W$.  Again we find a common refinement $V_{1},..,V_{r}$ of the finitely many partitions $V_{q_{j},1}, ...,  V_{q_{j},n_{j}}, Z_{q_{j}}$ of $V$.

Then $V = V_{1}\cup ...\cup V_{r}$ and $W = W_{q_{1}}\cup ..\cup W_{q_{m}}$ will be the desired partitions. We have to check that it works. 

We have to  identify  the exceptional set of pairs of indices.

To that avail let us fix some $q_{i}$ and call it $q$, and we focus on the subgraph $(V,W_{q}, R|(V\times W_{q}))$.  Let $I = \{i: 1\leq i\leq n_{q}:$ and $(i)_{q}$  above holds\}. Let $J$ be the rest of the indices $i$ between $1$ and $n_{q}$, namely where $(ii)_{q}$ holds. 

Let $B\subseteq V\times W_{q}$ be $\cup_{i\in I}((V_{q,i}\times W_{q})\cap R)  \cup\cup_{i\in J}((V_{q,i}\times W_{q})\setminus R)$. it is then clear that
\newline
{\em Claim 1.}  $(\mu\otimes \nu) (B) < (\epsilon^{2}/2)\nu(W_{q})$.

Let $\Sigma_{q,1}$ be the set of indices $i=1,..,r$ such that  $V_{i}\subseteq Z_{q}$, and $\Sigma_{q,2}$ the set of indices $i$ such that  $(\mu\otimes\nu)((V_{i}\times W_{q}) \cap B)\geq\epsilon\mu(V_{i})\nu(W_{q})$.  Let $\Sigma_{q}$ be the (disjoint) union of $\Sigma_{q,1}$ and $\Sigma_{q,2}$. 

Then 
\newline
{\em Claim 2.} $\sum_{i\in\Sigma_{q}}(\mu\otimes\nu) (V_{i}\times W_{q}) <\epsilon\nu(W_{q})$.
\newline
{\em Proof of Claim 2.} Note that $\sum_{i\in \Sigma_{q,1}}(\mu\otimes\nu)(V_{i}\times W_{q}) \leq (\mu\otimes \nu)(Z_{q}\times W_{q}) < (\epsilon/2)\nu(W_{q})$. So it suffices to prove that $\sum_{i\in\Sigma_{q,2}}(\mu\otimes\nu) (V_{i}\times W_{q}) <(\epsilon/2)\nu(W_{q})$. If not then by the definition of $\Sigma_{q,2}$, $(\mu\otimes \nu)(B) \geq \sum_{i\in \Sigma_{q,2}}(\mu\otimes\nu)((V_{i}\times W_{q})\cap B)\geq \sum_{i\in \Sigma_{q,2}}\epsilon\mu(V_{i})\nu(W_{q}) = \epsilon\sum_{i\in\Sigma_{q,2}}(\mu\otimes\nu)(V_{i}\times W_{q})\geq  (\epsilon^{2}/2)\nu(W_{q})$, which contradicts Claim 1.

Now suppose $t\notin\Sigma_{q}$. 
\newline
{\em Case (i).}  $V_{t}\subseteq  V_{q,i}$ for some $i\in I$. 
\newline
So $(V_{t}\times W_{q})\cap B  = (V_{t}\cap W_{q})\cap R$ and  has $\mu\otimes\nu$ measure $<\epsilon\mu(V_{t})\nu(W_{q})$. 

\vspace{2mm}
\noindent
{\em Case (ii).}  $V_{t}\subseteq V_{q,i}$ for some $i\in J$. 
\newline
Likewise we have that $(\mu\otimes\nu)((V_{t}\times W_{q})\setminus R) <\epsilon\mu(V_{t})\nu(W_{q})$. 

\vspace{2mm}
\noindent
Now let the global exceptional set $E = \{(i,q_{j}): i\in \Sigma_{q_{j}}: i=1,..,r, j=1,..,m\}$, and we see from Claim 2 (as well as the Case (i), Case (ii) discussion above) that that the conclusions (i) and (ii) or Corollary 3.2 are satisfied. 

\end{proof}

The application to families of finite graphs is almost identical to Proposition 2.3, with a similar proof, but we state it anyway.
\begin{Proposition}  Let ${\cal G} = (G_{i}:i\in I)$ be a family of finite (bipartiite) graphs $G = (V,W, R)$ such that one of the following happens:
\newline
(i) The graphs are uniformly definable in some model $M$ of an $NIP$ theory $T$. ,
\newline
(ii) For some model $M$ of some $NIP$ theory $T$, there is a graph $(V,W,R)$ definable in $M$ such that ${\cal G}$ is the family of finite (induced) subgraphs of $(V,W, E)$,
 or
\newline
(iii)  Every model $(V,W,R)$ of the common theory of the $G_{i}$'s is interpretable in a model of some $NIP$ theory.
\newline
THEN, for any $\epsilon$ there is $N_{\epsilon}$, such that for every $(V,W,R)\in {\cal G}$, there are partitions $V = V_{1}\cup .. \cup V_{n}$, and $W = W_{1} \cup .. \cup W_{m}$, with $n, m < N_{\epsilon}$ such that for some some ``exceptional" set $E$ of pairs $(i,j)$ (with $1\leq i \leq n$ and $1\leq j \leq m$),
\newline
(a)  cardinality of $\cup_{(i,j)\in E}V_{i}\times W_{j}$ is $<\epsilon|V||W|$, and 
\newline
(b) for all $(i,j)\notin E$, either $|(V_{i}\times W_{j})\cap R| < \epsilon|V_{i}||W_{j}|$ or $|(V_{i}\times W_{j})\setminus R| < \epsilon|V_{i}||W_{j}|$
\end{Proposition}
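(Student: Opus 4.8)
The plan is to follow the proof of Proposition 2.3 essentially verbatim, replacing the appeal to Corollary 2.2 by Corollary 3.2 and the distality hypothesis by generic stability. First I would argue by contradiction: fix $\epsilon > 0$ and suppose no $N_{\epsilon}$ works. Then for each $N$ there is a counterexample graph in $\cal G$, and (arranging the vertex cardinalities to increase) I obtain a sequence $G_{N_{1}}, G_{N_{2}}, \ldots$ of counterexamples with $N_{1} < N_{2} < \cdots$. Focusing on Context (i), Proposition 1.6 produces a saturated model $M^{*}$ of $T$ in some $\mathbb{V}^{*}$ and a definable graph $G^{*} = (V^{*}, W^{*}, R^{*})$ in $M^{*}$ with $V^{*}, W^{*}$ finite in the sense of $\mathbb{V}^{*}$ and satisfying the moreover clause (iii)$'$. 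I then form the nonstandard normalized counting measures $\mu^{*}, \nu^{*}$ on $V^{*}, W^{*}$ and let $\mu, \nu$ be the associated pseudofinite Keisler measures.

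The engine is Proposition 1.8: since $Th(M^{*})$ is $NIP$, $\mu$ is generically stable, hence definable over and finitely satisfiable in some small $M_{0}$, so Corollary 3.2 applies. Applying it to $(V^{*}, W^{*}, R^{*})$ with $\epsilon/2$ in place of $\epsilon$ yields definable partitions $V^{*} = V_{1}^{*} \cup \cdots \cup V_{n}^{*}$ and $W^{*} = W_{1}^{*} \cup \cdots \cup W_{m}^{*}$ together with an exceptional set $E$ of pairs such that $(\mu \times \nu)(\bigcup_{(i,j) \in E}(V_{i}^{*} \times W_{j}^{*})) < \epsilon/2$ and each non-exceptional pair is $(\epsilon/2)$-homogeneous for $R^{*}$ with respect to $\mu \otimes \nu$ in the sense of Corollary 3.2(ii).

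The step requiring the most care --- the single place where the $NIP$ argument diverges from the distal one --- is the passage from these \emph{standard} measure inequalities back to genuine cardinality inequalities in the finite graphs. The measures $\mu, \nu$ are the standard parts of $\mu^{*}, \nu^{*}$, so the strict inequalities coming out of Corollary 3.2 (with bound $\epsilon/2$) survive perturbing the bound upward: choosing $\delta$ with $\epsilon/2 < \delta < \epsilon$, I can replace every occurrence of $\mu, \nu$ by $\mu^{*}, \nu^{*}$ and every bound by the corresponding one with $\delta$, the infinitesimal gap being absorbed by the slack $\delta - \epsilon/2$. Concretely, the conditions ``$(\mu^{*} \times \nu^{*})(\bigcup_{(i,j)\in E}(V_{i}^{*} \times W_{j}^{*})) < \delta$'' and, for $(i,j) \notin E$, ``$(\mu^{*} \otimes \nu^{*})((V_{i}^{*} \times W_{j}^{*}) \cap R^{*}) < \delta\,\mu^{*}(V_{i}^{*})\nu^{*}(W_{j}^{*})$ or the dual'' are all first-order expressible about $(M^{*}, G^{*})$ in $\mathbb{V}^{*}$, since $\mu^{*}, \nu^{*}$ are honest counting measures there. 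I let $\psi$ be the sentence asserting the \emph{existence} of partitions of $V^{*}, W^{*}$ into $\le n$ and $\le m$ internal pieces and an exceptional set of pairs meeting these conditions; it is true of $(M^{*}, G^{*})$ in $\mathbb{V}^{*}$, witnessed by the definable partition just produced. Because $\mu^{*}(Z) = |Z \cap V^{*}|/|V^{*}|$ and similarly for $\nu^{*}$, clearing denominators rewrites $\psi$ purely in terms of the cardinalities of the pieces, exactly the form of conclusions (a) and (b) with $\delta < \epsilon$.

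Finally, the moreover clause (iii)$'$ of Proposition 1.6 transfers the truth of $\psi$ from $(M^{*}, G^{*})$ in $\mathbb{V}^{*}$ to infinitely many $(M, G_{N_{k}})$ in $\mathbb{V}$. For any such $N_{k}$ exceeding both $n$ and $m$, the graph $G_{N_{k}}$ then admits partitions into fewer than $N_{k}$ parts satisfying (a) and (b), contradicting the choice of $G_{N_{k}}$ as a counterexample. Contexts (ii) and (iii) are handled by the same argument with the variant of Proposition 1.6 indicated there. The main obstacle is thus bookkeeping rather than conceptual: one must check that the finitely many strict measure inequalities from Corollary 3.2 survive the standard-part and transfer passage, which is precisely why the slack between $\epsilon/2$ and $\epsilon$, through the intermediate $\delta$, is built in from the start.
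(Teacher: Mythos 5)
Your proposal is correct and is essentially the paper's own argument: the paper explicitly says Proposition 3.3 has "a similar proof" to Proposition 2.3, namely the counterexample-sequence/Proposition 1.6/Proposition 1.8 transfer, with Corollary 3.2 replacing Corollary 2.2 and generic stability used directly in place of smoothness-via-distality. Your extra care with the intermediate $\delta$ absorbing the infinitesimal error in the approximate-homogeneity inequalities is exactly the device the paper already uses in the proof of Proposition 2.3.
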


\begin{Remark} (i) Note that Corollary 3.2 depends only on the Keisler measure $\mu$ satisfying the generic domination statement over $M_{0}$ in Proposition 3.1, as well as being definable over $M_{0}$. (i.e. full generic stability of $\mu$ and $NIP$-ness of $T$ are not needed). 
\newline
(ii)  Likewise, if $R(x,y)$ is $\phi(x,y)$, assuming the domination statement and definability for a $\phi$-measure $\mu$, Corollary 3.2 will hold.

\end{Remark}

\section{The stable case} 

The stable regularity theorem concerns finite graphs $(V,W, R)$ where the edge relation $R(x,y)$ is $k$-stable, and gives a partition into almost homogeneous subgraphs but without any exceptional set.  The original statement and proof are in \cite{Malliaris-Shelah} and involve finite combinatorics in the presence of the Shelah $2$-rank and give optimal bounds. A pseudofinite proof making use of local stability theory was given in \cite{Malliaris-Pillay}. The proof we present here is a  simplification of the latter. There is no explicit use of any local ranks, other than   ingredients in the proof of Fact 1.1.

We first discuss the methods and relationship with the previous proofs.  Fix a complete theory $T$ and a stable formula $\phi(x,y)$, where $x$ is of sort $X$.  Let $\mu$ be a 
$\phi$- measure on $X$ over a saturated model say $\bar M$. Let $M_{0}$ be a small model such that $\mu$ does not fork over $M_{0}$ (i.e. every $\phi$ formula over $M_{0}$ with positive measure does not fork over $M_{0}$). Now every complete $\phi$-type $p$ over $M_{0}$ has a unique 
nonforking extension over $\bar M$ (i.e. to a complete $\phi$-type over $\bar M$). It follows that for each $p\in S_{\phi}(M_{0})$ and any $b\in {\bar M}$ at most one of $p\cup\{\phi(x,b)\}$, $p\cup\{\neg\phi(x,b)\}$ is $\mu$-wide. So we have domination of $X$ by $S_{\phi}(M_{0})$ (but with no exceptional sets).  So we can run the proof of Corollary 3.2, but note that it nevertheless gives a possibly nonempty set of exceptional pairs $(i,j)$ in the regularity statement.  So more is needed, and this is precisely Fact 1.1.

Remember that a bipartite graph  $(V,W,R)$  (or rather the edge relation $R$ on this graph)  is $k$-stable if there do not exist $a_{1},,..,a_{k}\in V$, $b_{1},..,b_{k}\in W$ such that $R(a_{i},b_{j})$ iff $i\leq j$.   Given an $L$-structure $M$ and $L$-formula $\phi(x,y)$ we get a corresponding bipartite graph $(X,Y, R)$  (where $X$ is the $x$-sort in $M$, $Y$ the $y$-sort in $M$ and $R$ the interpretation of $\phi(x,y)$ in $M$). And the formula $\phi(x,y)$ is stable for $Th(M)$ iff $(X,Y,R)$ is $k$-stable for some finite $k$. 
$\phi^{*}$ is the same formula as $\phi$ except the roles of variable variable and parameter variable are interchanged.

We first give the strong regularity theorem (analogues of Corollaries 2.2 and 3.2) for arbitrary graphs where the edge relation is stable.  
\begin{Proposition} Let $(V,W,R)$ be a graph definable in some structure $M$ where the relation $R(x,y)$ is stable.  Identify $R$ with the $L$-formula defining it.   Let $\mu_{x}$ be a Keisler measure on $V$ over $M$,  Then for each $\epsilon > 0$ there are partitions $V = V_{1}\cup ...\cup V_{n}$ of $V$ and $W = W_{1}\cup ..\cup W_{m}$ of $W$ into definable sets such for each $i,j$,
either for all $b\in W_{j}$, $\mu(V_{i}\setminus R(x,b)) \leq \epsilon\mu(V_{i})$, or for all $b\in W_{j}$, $\mu(V_{i}\cap R(x,b)) \leq\epsilon\mu(V_{i})$. 
Moreover each $V_{i}$ can defined by an $R$-formula, and each $W_{j}$ by a $R^*$-formula.

\end{Proposition}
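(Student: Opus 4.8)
The plan is to localise everything to the stable formula $\phi(x,y):=R(x,y)$ and to exploit the atomicity supplied by Fact 1.1. Only the restriction of $\mu$ to the Boolean algebra of $R$-formulas is relevant to the conclusion, and this restriction is a $\phi$-measure; so by Fact 1.1 I may write it as $\mu=\sum_i\alpha_i p_i$ with the $p_i$ distinct complete $\phi$-types over $M$, $\alpha_i>0$ and $\sum_i\alpha_i=1$. As in the proof of Corollary 3.2 I pass to a saturated $M$ and fix a small $M_0$ over which $\mu$ does not fork and $\phi$ is defined. Local stability then provides two inputs: (a) every complete $\phi$-type over $M_0$ has a \emph{unique} nonforking extension, so the domination of $V$ by $S_\phi(M_0)$ holds with \emph{empty} exceptional set (for each $p$ and each $b$ at most one of $p\cup\{\phi(x,b)\}$, $p\cup\{\neg\phi(x,b)\}$ is $\mu$-wide); and (b) each atom $p_i$ is definable by a $\phi^*$-formula $\psi_i(y)$, meaning $\phi(x,b)\in p_i$ iff $\models\psi_i(b)$.

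For the construction, fix $\epsilon>0$ and, using $\sum_i\alpha_i=1$, retain finitely many \emph{dominant} atoms $p_1,\dots,p_N$ carrying all but a small fraction of the mass. I partition $W$ by the atoms of the finite Boolean algebra generated by $\psi_1(y),\dots,\psi_N(y)$; these cells $W_j$ are $R^*$-formulas, and across each $W_j$ the decision $\phi(x,b)\in p_i$ of every dominant atom is constant. I partition $V$ by a finite Boolean combination of instances $\phi(x,c)$ chosen to separate $p_1,\dots,p_N$ from one another; these cells $V_t$ are $R$-formulas, each containing at most one dominant atom. On a pair $(V_t,W_j)$ the dominant atom in $V_t$, if any, decides $R(x,b)$ the same way for all $b\in W_j$, so that the mass of $V_t$ on the ``wrong'' side of $R(x,b)$ comes only from the non-dominant tail, uniformly over $b\in W_j$. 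This already selects, for each pair, which of the two inequalities is the candidate, and it is exactly where definability (b) and the empty exceptional set (a) are used; it also delivers the required $R$- and $R^*$-definability of the cells.

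The main obstacle is upgrading this to the \emph{relative} bound $\epsilon\mu(V_i)$ on \emph{every} pair, with \emph{no} exceptional set. Merely running the Corollary 3.2 machinery is not enough: a common refinement can concentrate the small tail mass into a cell of very small $\mu$-measure, producing genuine exceptional pairs, which is precisely the point the preceding discussion flags (``so more is needed, and this is precisely Fact 1.1''). Atomicity is the decisive tool, letting me reduce, for any target error, to finitely many atoms carrying all but an arbitrarily small fraction of the mass and then treat the residual low-mass region separately. The delicate point is that atoms whose mass is comparable to the tail cannot be made single-atom-dominant in their cells; I expect to handle these by iterating the separation on the residual $\phi$-submeasure, which is again atomic by Fact 1.1, the quantitative control and termination being the technical heart — morally the place where the $\phi$-rank induction underlying Fact 1.1 does its work. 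Cells of $\mu$-measure zero satisfy both inequalities trivially, and for the remaining cells the dominant atom fixes the homogeneity side uniformly over the corresponding $W_j$, yielding the stated partition with each $V_i$ an $R$-formula and each $W_j$ an $R^*$-formula.
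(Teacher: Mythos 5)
Your framework is the right one and matches the paper's: localise to the stable formula $R$, apply Fact 1.1 to write $\mu\restriction R$ as a weighted sum of atoms $\sum_i\alpha_ip_i$, use uniqueness of nonforking extensions to get domination with empty exceptional set, and partition $W$ by the Boolean atoms of the $R$-definitions $\psi_i(y)$. You also correctly diagnose the crux: merely separating finitely many ``dominant'' atoms into cells does not give the relative bound $\epsilon\mu(V_i)$, because the tail mass $\sum_{i>N}\alpha_i$ need not be small compared to the smallest retained $\alpha_i$ (e.g.\ $\alpha_i=c2^{-i}$ gives a tail exactly equal to $\alpha_N$). But at exactly this point your proof stops: ``I expect to handle these by iterating the separation on the residual $\phi$-submeasure \dots the quantitative control and termination being the technical heart.'' That is the theorem-bearing step, and it is not supplied. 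Moreover the proposed iteration is not obviously convergent: the residual submeasure is again an infinite atomic sum with the same comparability problem, so it is unclear what decreases from one stage to the next.

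The paper resolves this without any iteration, by a local shrinking argument rather than a global separation argument: for each atom $p_i$ (viewed as a point of $S_R(M_0)$, with $M_0$ a countable model over which $\mu$ does not fork) one chooses a clopen, i.e.\ $R$-definable over $M_0$, neighbourhood $V_i\in p_i$ with $\mu(V_i)<\alpha_i/(1-\epsilon)$; this is possible by (outer) regularity of the Borel measure $\mu_0$ on the countable space $S_R(M_0)$, and it forces $\mu(V_i\setminus\{p_i\})\leq\epsilon\mu(V_i)$ for \emph{every} such cell, regardless of how small $\alpha_i$ is. The countable set $B$ of non-atoms has $\mu_0$-measure $0$ and is covered by a clopen $U$ of measure $<\epsilon$; by compactness finitely many of the neighbourhoods $V_i$ cover $U^{c}$, the leftover $U$ is partitioned the same way after removing a still smaller clopen $U_1\supseteq B$, and $U_1$ is absorbed into $V_1$ using the strictness of $\mu(V_1)<\alpha_1/(1-\epsilon)$. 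The homogeneity over each $W_I$ then follows exactly as you describe, from $p_i'$ being the unique nonforking extension of $p_i$. So the gap is concrete: you need to replace ``cells that separate the atoms'' by ``cells that are $(1-\epsilon)$-concentrated on their atoms,'' and the tool is regularity of the measure on the type space, not a further induction on residual measures.
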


\begin{proof} There is no harm in assuming the language to be countable, and $M = \bar M$ to be saturated.  By Fact 1.1, $\mu|R = \sum_{i\in I}\alpha_{i}p_{i}'$, for some countable $I$, complete global $R$-types $p_{i}'$, and  $\alpha_{i}$ with $0<\alpha_{i}\leq 1$ such that $\sum_{i}\alpha_{i} = 1$. 

So note that $\mu(p_{i}') = \alpha_{i} >0$  for $i\in I$. 

Let $M_{0}$ be a countable model such that $\mu$ does not fork over $M_{0}$, equivalently each $p_{i}'$, for $i\in I$ does not fork over $M_{0}$.
Let $p_{i}$ be the restriction of $p_{i}'$ to $M_{0}$. So $p_{i}'$ is the unique global nonforking extension of $p_{i}$, and $\mu|R$ is the unique nonforking extension of 
the $R$-measure over $M_{0}$, $\sum_{i\in I}\alpha_{i}p_{i}$. Write $\mu_{0}$ for $\mu|M_{0}$.  Notice $\mu_{0}(p_{i}) = \alpha_{i}$. 

By stability,  $S_{R}(M_{0})$ is countable. Let $B$ be the countable (so Borel)  set $S_{R}(M_{0}\setminus\{p_{i}:i\in I\}$. Then $\mu_{0}(B) = 0$.

Now fix $\epsilon > 0$.  Let $U$ be a $R$-definable over $M_{0}$ set which contains $B$, and has $\mu_{0}$-measure $<\epsilon$. 
For each $i\in I$ such that $p_{i}\notin U$, let $V_{i}$ be a formula (or definable set) in $p_{i}$ such that $\mu(V_{i})<\alpha_{i}/(1-\epsilon)$. 
By compactness finitely many $V_{i}$, say $V_{1},..,V_{k}$ cover $U^{c}$ (complement of $U$ in $S_{R}(M_{0})$), and we may assume that the $V_{i}$ are disjoint. 
Let $\delta = (\alpha_{1}/(1-\epsilon)) - \mu(V_{1})$.  And let $U_{1}$ be a $R$-definable set  over $M_{0}$ set such that   $B\subseteq U_{1}\subset U$, and $\mu(U_{1}) <\delta$. 

As before find   $V_{k+1},...,V_{n}$  partitioning $U\setminus U_{1}$ (assuming the latter is nonempty), with $V_{j}\in p_{j}$  ($j\in I$) and 
$\mu(V_{j}) \leq \alpha_{j}/(1-\epsilon)$  (where again $\alpha_{j}= \mu_{0}(p_j) > 0$). 
Replace $V_{1}$ by $V_{1}\cup U_{1}$ and we have for the new $V_{1}$, $\mu(V_{1})\leq \alpha_{1}(1-\epsilon)$. 

So to summarise, we have so far:
\newline
{\em Claim.} $V_{1},...,V_{n}$ are $R$-definable over $M_{0}$ sets which partition $V$, and for each $i=1,..,n$ there is $p_{i}\in S_{R}(M_{0})$ with $\mu(p_{i}) > 0$, $V_{i}\in p_{i}$ and $\mu(V_{i}\setminus p_{i}) \leq \epsilon\mu(V_{i})$. 

\vspace{2mm}
\noindent
Now we  partition $W$ using the $R$-definitions of $p_{1},..,p_{n}$: 
For each $i=1,..,n$ the $R$-definition of $p_{i}$ (or equivalently $p_{i}'$) is a $R^{*}$-formula $\psi_{i}(y)$ over $M_{0}$ (with the property that for all $b\in {\bar M}$, $\phi(x,b)\in p_{i}'$ iff $\models \psi_{i}(b)$).  For each subset $I$ of $\{1,..,n\}$, let $W_{I}$ be the set defined by  $\wedge_{i\in I}\psi_{i}(y)\wedge\wedge_{i\notin I}\neg\psi_{i}(y)$.  So the $W_{I}$ partition $W$ into $R^{*}$-definable sets.  $V = V_{1}\cup ...\cup V_{n}$ and $W = \cup_{I}W_{I}$ will be the desired partitions of $V, W$. 

We have to check that the conclusions hold. 
Note that for for each $i\in \{1,..,n\}$ and $I\subseteq \{1,..,n\}$, we have either
\newline
(a)  for all $b\in W_{I}$, $\phi(x,b)\in p_{i}'$, or
\newline
(b) for all $b\in W_{I}$, $\neg\phi(x,b)\in p_{i}'$. 

In case (a), as $p_{i}'$ is the unique nonforking extension of $p_{i}$, for each $b\in W_{I}$, $p_{i}\cup\{\neg\phi(x,b)\}$  forks over $M_{0}$ so has $\mu$-measure $0$. Hence as $\mu(V_{i}\setminus p_{i})\leq\epsilon\mu(V_{i})$ it follows that  $\mu(V_{i}\setminus R(x,b))\leq\epsilon \mu(V_{i})$ for all $b\in W_{q}$.

Likewise in case (b), $\mu(V_{i}\cap R(x,b))\leq\epsilon\mu(V_{i})$ for all $b\in W_{I}$.

This completes the proof.

\end{proof} 
The stable regularity lemma (or a suitable version)  for families of finite graphs now follows as earlier:
\begin{Corollary} Fix $k$ and let ${\cal G}$ be the family of finite graphs $(V,W,R)$ where the relation $R$ is $k$-stable.  Then for any $\epsilon>0$, there is $N$ such that for each 
$(V,W,R)\in {\cal G}$ there are partitions  $V = V_{1}\cup ..\cup V_{n}$ and $W = W_{1}\cup ...\cup W_{m}$ with $n,m<N$ such that for each $i$ and $j$, either 
$|(V_{i}\times W_{j})\cap R| \leq \epsilon |V_{i}||W_{j}$ (so the induced graph on $V_{i},W_{j}$ is almost empty) or $|(V_{i}\times W_{j})\setminus R| \leq \epsilon|V_{i}||W_{j}|$ (so the induced graph on $V_{i}, W_{j}$ is almost complete).

\end{Corollary}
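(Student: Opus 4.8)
The plan is to deduce Corollary 4.2 from Proposition 4.1 by the same compactness-and-pseudofiniteness argument used to pass from Corollary 2.2 to Proposition 2.3 (and from Corollary 3.2 to Proposition 3.4), but taking advantage of the fact that here the strong regularity statement of Proposition 4.1 carries \emph{no} exceptional set of pairs. Suppose for contradiction that the uniform bound $N$ fails for some fixed $\epsilon>0$. Then I would extract a sequence of finite counterexamples $(V_{N_1},W_{N_1},R_{N_1}), (V_{N_2},W_{N_2},R_{N_2}),\dots$ in $\mathcal{G}$, with strictly increasing vertex-set cardinalities, each of which admits no partition into fewer than $N_k$ pieces with the desired $\epsilon$-homogeneity.

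Next I would invoke Proposition 1.6 applied to the structure $(M,G)$ presenting these graphs: since each $R_{N_k}$ is $k$-stable and $k$-stability is expressible by a single first-order sentence (the negation of the order property of length $k+1$), the common theory of the $G_{N_k}$ includes that sentence, so any ultraproduct/nonstandard model $(M^{*},G^{*})$ with $G^{*}=(V^{*},W^{*},R^{*})$ again has $R^{*}$ a $k$-stable relation; in particular $R^{*}$, viewed as the formula $\phi(x,y)$, is stable for $\mathrm{Th}(M^{*})$. I would take $V^{*},W^{*}$ finite in the sense of $\mathbb{V}^{*}$, arrange that the moreover-clause (iii)' of Proposition 1.6 holds, and form the nonstandard normalized counting measures $\mu^{*}$ on $V^{*}$ and $\nu^{*}$ on $W^{*}$, with $\mu,\nu$ the associated pseudofinite Keisler measures. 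Here $\mu$ is a genuine Keisler (indeed $\phi$-) measure on $V^{*}$, which is all that Proposition 4.1 requires.

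I would then apply Proposition 4.1 to the graph $(V^{*},W^{*},R^{*})$ with a suitable $\epsilon'<\epsilon$ (say $\epsilon' = \epsilon/2$), obtaining definable partitions $V^{*}=V_1^{*}\cup\cdots\cup V_n^{*}$ and $W^{*}=W_1^{*}\cup\cdots\cup W_m^{*}$ such that for every pair $(i,j)$, either $\mu(V_i^{*}\cap R(x,b))\le\epsilon'\mu(V_i^{*})$ for all $b\in W_j^{*}$, or $\mu(V_i^{*}\setminus R(x,b))\le\epsilon'\mu(V_i^{*})$ for all $b\in W_j^{*}$. Integrating the first alternative against $\nu^{*}$ (and using Fubini for the counting measures) translates the measure statement into $(\mu^{*}\otimes\nu^{*})\big((V_i^{*}\times W_j^{*})\cap R^{*}\big)$ being at most $\epsilon'\mu^{*}(V_i^{*})\nu^{*}(W_j^{*})$, and likewise for the complement in the second alternative; taking standard parts and choosing $\epsilon'$ strictly below $\epsilon$ absorbs the infinitesimal error, so the homogeneity holds with the genuine constant $\epsilon$ and with \emph{no} exceptional pairs. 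Finally I would encode ``there exist partitions into $n$ and $m$ definable pieces such that every induced pair is $\epsilon$-almost-empty or $\epsilon$-almost-complete'' as a single set-theoretic formula $\psi$ true of $(M^{*},G^{*})$ in $\mathbb{V}^{*}$; by clause (iii)' this $\psi$ holds of infinitely many $(M,G_{N_k})$, and for any such $N_k>\max(n,m)$ we contradict the choice of $G_{N_k}$ as a counterexample.

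The main obstacle, and the one point deserving genuine care, is the passage from the measure-theoretic homogeneity of Proposition 4.1 to the cardinality statement (b) of the corollary \emph{with the infinitesimal slack correctly managed}: one must verify that the inequalities $\mu(V_i^{*}\setminus R(x,b))\le\epsilon'\mu(V_i^{*})$, which hold \emph{uniformly in $b\in W_j^{*}$} and are about the standard-part measure $\mu$, can be pushed back to uniform internal inequalities about $\mu^{*}$ (hence about genuine finite counting ratios) by the standard-part trick, without losing uniformity across the finitely many pairs. Because there are only finitely many pairs $(i,j)$ and the strict inequality $\epsilon'<\epsilon$ gives a fixed positive gap, this is routine rather than delicate, and crucially no exceptional set needs to be introduced — the disappearance of the exceptional pairs is exactly the extra strength that Fact 1.1 buys us through Proposition 4.1, and is what distinguishes this stable corollary from its $NIP$ and distal analogues.
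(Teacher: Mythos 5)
Your proposal is correct and is essentially the argument the paper intends: the paper gives no explicit proof of this corollary, saying only that it ``follows as earlier,'' i.e.\ by the same counterexample-sequence, Proposition 1.6, pseudofinite-counting-measure, and transfer-via-clause-(iii)$'$ scheme used for Propositions 2.3 and 3.3, with Proposition 4.1 in place of Corollaries 2.2 and 3.2 and with the exceptional set absent. The one point you flag (pushing the standard-part inequalities back to internal counting inequalities uniformly in $b$) is indeed routine here because the pieces $V_{i}^{*}$ produced in the proof of Proposition 4.1 each contain a type of positive $\mu$-measure, so $\mu^{*}(V_{i}^{*})$ is non-infinitesimal and the gap $\epsilon'<\epsilon$ absorbs the error.
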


\end{document}